\newtheorem{lemma}{Lemma}[section]
\newtheorem{corollary}[lemma]{Corollary}
\newtheorem{theorem}{Theorem}[section]
\newenvironment{thmbis}[1]
 {%
 \addtocounter{theorem}{-1}%
 \begin{theorem}}
 {\end{theorem}}
\newtheorem{claim}{Claim}
\newcommand{\I}{\mathcal {I}}
\newcommand{\N}{\mathcal {N}}
\newcommand{\R}{\mathcal {R}}
\newcommand{\bbZ}{{\mathbb Z}}
\newcommand{\bbC}{{\mathbb C}}
\newcommand{\va}{{\mathbf{a}}}
\newcommand{\vo}{{\mathbf{o}}}
\newcommand{\vx}{{\mathbf{x}}}
\newcommand{\vy}{{\mathbf{y}}}
\newcommand{\Rmnum}[1]{\uppercase\expandafter{\romannumeral #1}}
\numberwithin{equation}{section}
\begin{document}
\title{Trace reconstruction of matrices and hypermatrices}
\date{}%\today
\author{Wenjie~Zhong\thanks{W. Zhong ({\tt zhongwj@mail.ustc.edu.cn}) is with the School of Mathematical Sciences, University of Science and Technology of China, Hefei, 230026, Anhui, China.} ~and~Xiande~Zhang
\thanks{X. Zhang ({\tt drzhangx@ustc.edu.cn}) is with the School of Mathematical Sciences,
University of Science and Technology of China, Hefei, 230026, and with Hefei National Laboratory, University of Science and Technology of China, Hefei 230088, China.
% The research of X. Zhang is supported by the National Key Research and
%Development Programs of China 2023YFA1010200 and 2020YFA0713100, the NSFC
%under Grants No. 12171452 and No. 12231014, and the Innovation Program for Quantum Science and Technology
%2021ZD0302902.
}
}
%\author{Yuhao~Zhao~and~Xiande~Zhang
        %\thanks{Y. Zhao ({\tt zhaoyh21@mail.ustc.edu.cn}) is with the School of Mathematical Sciences, University of Science and Technology of China, Hefei, 230026, Anhui, China.}

%%\thanks{X. Zhang ({\tt drzhangx@ustc.edu.cn}) is with the School of Mathematical Sciences,
%University of Science and Technology of China, Hefei, 230026, and with Hefei National Laboratory, University of Science and Technology of China, Hefei 230088, China.  The research of X. Zhang is supported by the National Key Research and
%Development Program of China 2020YFA0713100 and 2023YFA1010200, the NSFC
%under Grants No. 12171452 and No. 12231014, and the Innovation Program for Quantum Science and Technology
%2021ZD0302902.}

%}

\maketitle

\begin{abstract}
A \emph{trace} of a sequence is generated by deleting each bit of the sequence independently with a fixed probability. The well-studied \emph{trace reconstruction} problem asks how many traces are required to reconstruct an unknown binary sequence with high probability. In this paper, we study the multivariate version of this problem  for  matrices and hypermatrices, where a trace is generated by deleting each row/column of the matrix or each slice of the hypermatrix independently
with a constant probability. Previously, Krishnamurthy et al. showed that $\exp(\widetilde{O}(n^{d/(d+2)}))$ traces suffice to reconstruct any unknown $n\times n$ matrix (for $d=2$) and any unknown $n^{\times d}$ hypermatrix. By developing a dimension reduction procedure and establishing a multivariate version of the Littlewood-type result, we improve this upper bound by showing that $\exp(\widetilde{O}(n^{3/7}))$ traces suffice to reconstruct any unknown $n\times n$ matrix, and $\exp(\widetilde{O}(n^{3/5}))$ traces suffice to reconstruct any unknown $n^{\times d}$ hypermatrix. This breaks the tendency to trivial $\exp(O(n))$  as the dimension $d$ grows.
\end{abstract}

\section{Introduction}

Reconstructing a combinatorial object from a limited amount of sub-information is a fundamental problem in computer science and has been widely studied, for example, the sequence reconstruction problem initiated by Levenshtein in 1997 \cite{levenshtein1997reconstruction,levenshtein2001efficient,levenshtein2001efficientfrom,konstantinova2007reconstruction,gabrys2018sequence, ye2023reconstruction}, the deck problem introduced by Kalashnik in 1973 \cite{kalashnik1973reconstruction,krasikov1997reconstruction, foster2000improvement,dudik2003reconstruction,kos2009reconstruction,zhang2024reconstruction}, and the graph reconstruction problem with the famous Ulam's conjecture \cite{o1970ulam,bondy1977graph,bollobas1990almost,lauri2016topics,kostochka2020reconstruction,groenland2021size}.

Motivated by the study of reconstructing DNA sequences, the trace reconstruction problem is a probabilistic version of the sequence reconstruction  problem, which was first introduced by Batu et al. \cite{batu2004reconstructing}. The goal of this problem is to determine the minimum number of \emph{traces} $T(n)$ that are required to reconstruct an unknown string $x\in \{0,1\}^n$ {with high probability}. Here, a {trace} of $x$ is a random substring of $x$ obtained by deleting each bit of $x$ with a fixed probability $q$ independently. The term \emph{with high probability} means that the probability can be arbitrarily close to $1$ as the string length $n$ grows.

For the upper bound of $T(n)$, Holenstein et al. \cite{holenstein2008trace} first showed that $\exp(\sqrt{n}\text{poly}(\log n))$ traces suffice. Then Nazarov and Peres \cite{nazarov2017trace} and De et al. \cite{de2017optimal} independently proved that $T(n)\leq \exp(O(n^{1/3}))$, which was recently improved to $\exp(O(n^{1/5}\log^5 n))$ by Chase \cite{chase2021separating}. As for the lower bound, Batu et. al. \cite{batu2004reconstructing} first proved that $\Omega(n)$ traces are necessary. Then Holden and Lyons \cite{holden2020lower} later proved that $T(n)\geq \Omega(n^{5/4}/\sqrt{\log n})$, which was recently improved to $ \Omega(n^{3/2}/\log^7 n))$ by Chase \cite{chase2021new}. More works on the trace reconstruction problem can be found in \cite{peres2017average,holden2020subpolynomial,ban2019beyond,ban2019efficient,cheraghchi2020coded,brakensiek2020coded,narayanan2020population,
narayanan2021circular,davies2021reconstructing,krishnamurthy2021trace,davies2021approximate,chase2021approximate}.

For a generalization of the trace reconstruction problem to  dimension two,  Krishnamurthy et al. \cite{krishnamurthy2021trace} considered this problem for matrices. Here a trace of a matrix $X\in \{0,1\}^{n\times n}$ is a random submatrix of $X$ by deleting each row and each column independently with probability $q$. They showed that  $\exp(\widetilde{O}(n^{1/2}))$\footnote{For functions $f$ and $g$, we say $f=\widetilde{O}(g)$
if $|f|\leq C|g|\log^C|g|$
for some constant $C$.} traces suffice to recover an arbitrary matrix. This problem can be further extended to general higher dimensions, say dimension $d$, for which a trace is defined to be a random sub-hypermatrix obtained by deleting each slice, i.e., a $(d-1)$-dimensional sub-hypermatrix of full size, independently with a constant probability $q$.
Krishnamurthy et al. \cite{krishnamurthy2021trace} similarly proved that $\exp(\widetilde{O}(n^{d/(d+2)}))$ traces suffice to recover an arbitrary $d$-dimensional hypermatrix $X\in \{0,1\}^{n^{\times d}}$. Note that this bound gets closer to trivial when the dimension $d$ becomes big.

In this paper, we aim to improve the upper bounds given by Krishnamurthy et al. \cite{krishnamurthy2021trace} for general dimensions.  We show that $\exp(\widetilde{O}(n^{3/7}))$ traces are enough for trace reconstruction of a matrix, $\exp(\widetilde{O}(n^{5/9}))$ traces are enough for trace reconstruction of a $3$-dimensional hypermatrix, say a cube. For general dimension $d$, we further improve the upper bound $\exp(\widetilde{O}(n^{d/(d+2)}))$ to $\exp(\widetilde{O}(n^{3/5}))$, where the main exponential term $n^{3/5}$ is independent of $d$, breaking the asymptotic tendency to trivial $n$ as $d$ grows. Our results are stated as follows.

\begin{theorem}
For any fixed deletion probability $q\in(0,1)$, the following hold with high probability:
\begin{itemize}
  \item[(1)]\label{theorem-1} $\exp(O(n^{3/7}\log^{10/3}n))$ traces suffice to reconstruct an arbitrary matrix $X\in \{0,1\}^{n\times n}$;
  \item[(2)]\label{theorem-2} $\exp(O(n^{5/9}\log^{5/2}n))$ traces suffice to reconstruct an arbitrary $3$-dimensional hypermatrix $X\in \{0,1\}^{n\times n\times n}$; and
  \item[(3)]\label{theorem-3} for any fixed dimension $d\geq 4$, $\exp(O(n^{3/5}\log n))$ traces suffice to reconstruct an arbitrary hypermatrix $X\in \{0,1\}^{n^{\times d}}$.
\end{itemize}

\end{theorem}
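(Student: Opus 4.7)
The plan is to follow the mean-based estimator paradigm of Nazarov--Peres and De--O'Donnell--Servedio, lifted to $d$ dimensions. To each hypermatrix $X\in\{0,1\}^{n^{\times d}}$ we associate the multivariate generating polynomial
\[
P_X(z_1,\dots,z_d) \;=\; \sum_{i_1,\dots,i_d=0}^{n-1} X_{i_1,\dots,i_d}\, z_1^{i_1}\cdots z_d^{i_d}.
\]
A coordinate-by-coordinate computation, iterating the one-dimensional trace expectation formula, shows that the expected value of a natural complex-weighted linear statistic of the random trace equals $P_X$ evaluated at a point of the form $(\phi_q(z_1),\dots,\phi_q(z_d))$ for a fixed M\"obius-type map $\phi_q$. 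By Chebyshev's inequality, $T=\exp(\widetilde O(n^\alpha))$ traces suffice to distinguish any two distinct $X,Y$ with high probability, provided one can exhibit a point on the appropriate torus at which $|P_X-P_Y|\geq \exp(-\widetilde O(n^\alpha))$.

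The key new ingredient is therefore a multivariate Littlewood-type lower bound: for every nonzero polynomial $Q(z_1,\dots,z_d)$ with coefficients in $\{-1,0,1\}$ and degree $<n$ in each variable, there exists a point on a prescribed domain where $|Q|\geq \exp(-\widetilde O(n^\alpha))$. We would establish this via a dimension-reduction procedure that trades one variable at a time against a controlled loss of precision. Writing $Q$ as a polynomial in $z_d$ whose coefficients $Q_0,\dots,Q_{n-1}$ are $\{-1,0,1\}$-polynomials in $z_1,\dots,z_{d-1}$, we chunk the index range into blocks of a parameter length $L$. Since $Q\not\equiv 0$, pigeonhole supplies a block on which the associated coefficients are not simultaneously zero; on such a block the one-dimensional Borwein--Erdelyi / Nazarov--Peres bound in the $z_d$ variable, applied at scale $L$, produces a $(d-1)$-variate residual polynomial whose coefficients are no longer in $\{-1,0,1\}$ but whose sup-norm and non-vanishing are tightly controlled. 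Recursively applying the same argument to the remaining $d-1$ variables then yields the desired lower bound.

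The central obstacle is the optimization of the block-length parameter $L$ at each level of the recursion, together with careful bookkeeping of how the effective \emph{coefficient heights} accumulate across levels. Two competing contributions appear at each step: an $\exp(\widetilde O(L^{1/3}))$ loss from the one-dimensional Littlewood bound at scale $L$, and a further polynomial-in-$(n/L)$ loss from pigeonholing over the $n/L$ blocks. Balancing these, the two-dimensional case admits an optimum at roughly $L=\Theta(n^{2/7})$, giving $\alpha=3/7$ in part~(1); a similar balancing for $d=3$ yields $\alpha=5/9$ in part~(2). For $d\geq 4$ the recursion stabilizes at the dimension-free plateau $\alpha=3/5$, because beyond three variables the cost of additional pigeonhole steps is absorbed by the standing bound. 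Making this balancing rigorous, while controlling the growth of the intermediate effective coefficient bounds so that the one-dimensional Littlewood input remains applicable at every recursion step, is the technical crux of the proof.
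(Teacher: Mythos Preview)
Your proposal has a genuine gap: it relies solely on the \emph{single-bit} generating polynomial $P_X$, and that framework is precisely what Krishnamurthy et al.\ used to obtain $\exp(\widetilde O(n^{d/(d+2)}))$. The improvements to $n^{3/7}$, $n^{5/9}$, $n^{3/5}$ in this theorem are not obtained by sharpening the multivariate Littlewood bound for $P_X-P_Y$; they require \emph{multi-bit} statistics in the sense of Chase. Concretely, for two matrices $X,Y$ that agree on a thick border (the ``hard'' case), the paper does not analyse $P_X-P_Y$ at all. Instead it chooses a pattern $W\in\{0,1\}^{l^{\times r}}$ with $l\approx n^{1/7}$ (for $d=2$) and studies the $W$-generating function $\sum_k (1_{X_k=W}-1_{Y_k=W})z^{\odot k}$. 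The point of this choice is that the associated \emph{contiguous} polynomial is $n^{1/7}$-sparse, so Chase's sparse single-variable bound (Theorem~6.2 in his paper) yields $\exp(-\widetilde O(n^{1/7}))$, and one further application of the paper's Lemma~2.1 with $m=n$, $a\approx 1/7$ gives $n^{(1+2a)/3}=n^{3/7}$. None of this is accessible from $P_X-P_Y$, which has no sparsity.

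Your recursive chunking argument, as written, also does not produce the claimed numerics. After chunking the $z_d$-range into blocks of length $L$ and locating the first nonzero block, the polynomial still carries the prefactor $z_d^{j_0}$ with $j_0$ as large as $n$, so you are forced into the regime $m_0\le n$ of Lemma~2.1 rather than $m_0\le L$. Iterating that lemma with $a=1/3$ for $d=2$ gives $n^{5/9}$, not $n^{3/7}$; the ``$L^{1/3}$ loss versus $n/L$ pigeonhole'' balancing you describe does not correspond to any step that actually survives the prefactor. Likewise, your claim that the recursion ``stabilizes at $3/5$'' for $d\ge 4$ has no mechanism: in the paper the $3/5$ arises from a \emph{sparse} multivariate Littlewood bound (Theorem~\ref{thm-sparsepolynomial-multivariate-lowerbound} with $\mu=3/5$, $L=n^{1/5}$), proved by a geometric tangent-hyperplane reduction to one variable, and that sparsity is again manufactured by the choice of $W$ via the dimension-reduction procedure on $A=X-Y$. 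Without the $W$-statistic and the sparsity it induces, the best generic bound the paper can prove on $\gamma(L)^d$ is $\exp(-O(Ln\log n))$ (Corollary~\ref{cor-polynomial-multivariate-lowerbound}), which balanced against $n/L^2$ gives nothing better than trivial.
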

%We define a sparse polynomial below.

To get the upper bound $\exp(\widetilde{O}(n^{3/5}))$ in Theorem~\ref{theorem-1} (3), we generalize the Littlewood-type result for single-variable polynomials in \cite{borwein1997littlewood} to multi-variable polynomials in the following theorem, which we think is interesting itself.
We actually concern about a stronger version with sparse property, which was established by Chase \cite[Theorem 6.2]{chase2021separating} for single-variable polynomials.
Let $h(z_1,\ldots, z_d)= \sum c_{k_1\cdots k_d}z_1^{k_1}\cdots z_d^{k_d}$ be a $d$-variable complex polynomial with each coefficient $c_{k_1\cdots k_d}\in \{0,\pm 1\}$. We say $h$ is \emph{$s$-sparse}, if there is some $1\leq j\leq d$ such that $|k_j-k'_j|\ge s$ whenever two coefficients $c_{k_1\cdots k_d}=c_{k'_1\cdots k'_d}\neq 0$.

%\textcolor[rgb]{0.00,0.00,1.00}{Now, we provide our Littlewood-type result about sparse polynomials, as a multivariate version of Chase in Theorem~\ref{thm-sparsepolynomial-lowerbound}.}

\begin{theorem}\label{thm-sparsepolynomial-multivariate-lowerbound}
%Let $h(z_1,\ldots, z_d)= \sum_{k\in [n]^d} c_kz_1^{k_1}\cdots z_d^{k_d}$ be a nonzero complex polynomial with $c_k\in \{0,\pm 1\}$. If $h$ is $n^\mu$-sparse for some $\mu\in [0,1)$,  then
%Let $h(z_1,\ldots, z_d)= \sum_{k} c_kz_1^{k_1}\cdots z_d^{k_d}$ be a nonzero $d$-variable polynomial over the complex number field $\bbC$, where $c_k\in \{0,\pm 1\}$ and $k=(k_1,\ldots,k_d)$ runs over $\{0,1,\ldots,n-1\}^d$. If there exists $\mu\in [0,1)$ such that $c_{k_1\cdots k_d}=c_{k'_1\cdots k'_d}\neq 0$ implies $|k_j-k'_j|\ge n^\mu$ for some $1\leq j\leq d$,  then
Let $h(z_1,\ldots, z_d)= \sum_{k\in [n]^d} c_kz_1^{k_1}\cdots z_d^{k_d}$ be a nonzero $n^\mu$-sparse polynomial with $\mu\in [0,1)$ and each $c_k\in \{0,\pm 1\}$. Then for any $\Delta\ge 1$ and any $L$ with $1\le L\le n^\Delta$,
\[\max_{z_1,\ldots, z_d\in \gamma(L)}|h|\ge e^{-O(\Delta L n^{1-\mu} \log n)}.\]
Here $[n]=\{0,1,\ldots,n-1\}$ and $\gamma(L):= \{e^{i\theta}:-\pi/L\leq \theta\leq \pi/L\}$ for any $L\ge 1$.\end{theorem}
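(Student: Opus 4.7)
The plan is to translate $\max_{\gamma(L)^d}|h|$ into a supremum of an exponential sum on a box in $\bbR^d$, then reduce to a one-variable problem by restricting to a single line through the origin and invoking the classical Nazarov--Tur\'an lemma for exponential sums.

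The starting observation, which is where the sparsity hypothesis is used, is that $h$ has surprisingly few nonzero coefficients. Let $j$ be the direction in which $h$ is $n^\mu$-sparse. By hypothesis, any two distinct same-sign nonzero coefficients $c_k = c_{k'}\in\{\pm 1\}$ satisfy $|k_j - k'_j|\ge n^\mu$; in particular, distinct same-sign nonzero coefficients have pairwise distinct $k_j$-values forming an $n^\mu$-separated subset of $[n]$. Hence the number of $+1$ coefficients (and similarly of $-1$ coefficients) is at most $\lceil n^{1-\mu}\rceil$, giving a total count $M := |\{k : c_k\neq 0\}| \le 2\lceil n^{1-\mu}\rceil = O(n^{1-\mu})$. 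Notice that this bound does not grow with $d$.

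Next, I would view $h$ as the exponential sum $P(\theta) := h(e^{i\theta_1},\ldots,e^{i\theta_d}) = \sum_{c_k\neq 0} c_k\, e^{i\langle k,\theta\rangle}$ on the torus $[-\pi,\pi]^d$. Parseval's identity gives $(2\pi)^{-d}\int|P|^2\,d\theta = \sum_k |c_k|^2 = M \ge 1$, hence $M_\infty := \sup_{\theta\in[-\pi,\pi]^d}|P(\theta)| \ge \sqrt{M} \ge 1$. Let $\theta^*$ attain this supremum, and set $v := \theta^*$ if $\theta^*\neq 0$ and any fixed nonzero direction otherwise. Restricting $P$ to the line through the origin in direction $v$ yields
\[Q(t) \;:=\; P(tv) \;=\; \sum_{c_k\neq 0} c_k\, e^{it\langle k,v\rangle},\]
which (after combining equal frequencies) is a one-variable exponential sum with at most $M$ distinct real frequencies. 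It is not identically zero, since $|Q(1)|=|P(\theta^*)|=M_\infty>0$. The 1D intervals
\[J := \{t\in\bbR : tv\in [-\pi/L,\pi/L]^d\} \;\subset\; I := \{t\in\bbR : tv\in[-\pi,\pi]^d\}\]
satisfy $|I|/|J| = L$ independently of $v$. Applying Nazarov's complete version of the Tur\'an lemma for exponential sums gives
\[\sup_{t\in I}|Q(t)| \;\le\; (AL)^{M-1}\sup_{t\in J}|Q(t)|\]
for a universal constant $A$. Since $\sup_I|Q|\ge |Q(1)| = M_\infty\ge 1$ and $\sup_J|Q|\le\max_{z_1,\ldots,z_d\in\gamma(L)}|h(z)|$, rearranging yields
\[\max_{z_1,\ldots,z_d\in\gamma(L)}|h| \;\ge\; (AL)^{-(M-1)} \;\ge\; \exp\!\bigl(-O(n^{1-\mu}\log L)\bigr) \;\ge\; \exp\!\bigl(-O(\Delta L\, n^{1-\mu}\log n)\bigr),\]
using $\log L\le\Delta\log n$ (from $L\le n^\Delta$) and $L\ge 1$ in the final inequality.

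The main technical concern is the behavior of Nazarov's lemma in the presence of possible frequency degeneracies $\langle k,v\rangle = \langle k',v\rangle$ for distinct pairs of nonzero coefficients. Such collisions only reduce the effective number of terms of $Q$ and hence improve the exponent in Nazarov's inequality, so they cause no harm; the choice $v=\theta^*$ guarantees $Q\not\equiv 0$, so the lemma is non-vacuous. The remaining check is that the universal constant in Nazarov's inequality is independent of $d$, which is immediate since $Q$ is genuinely one-dimensional and the dimension enters only through the bound $M = O(n^{1-\mu})$ on the number of terms.
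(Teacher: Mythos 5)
Your argument stands or falls on its very first step: the claim that $n^\mu$-sparsity forces the total number of nonzero coefficients to be $M=O(n^{1-\mu})$, independently of $d$. That count is only valid if you read the definition as saying that one \emph{fixed} coordinate $j$ separates \emph{every} pair of same-sign nonzero coefficients. The notion the paper actually works with --- the one produced by Lemma~\ref{lem-construction-Wgen-function}(b), which is where the sparse polynomials fed into this theorem come from, and the only property invoked in the paper's own proof --- is the per-pair version: for any two distinct positions $k\neq k'$ with $c_k=c_{k'}\neq 0$ there is \emph{some} coordinate, depending on the pair, with $|k_j-k'_j|\ge n^\mu$; equivalently, same-sign positions are $n^\mu$-separated in $\ell_\infty$ (hence in Euclidean distance). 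Under that reading your counting step fails badly: putting $c_k=+1$ at every point of the grid $(n^\mu\bbZ)^d\cap[n]^d$ gives an $n^\mu$-sparse polynomial with $M=\Theta(n^{d(1-\mu)})$ nonzero terms. Feeding this into your Nazarov--Tur\'an step yields only $(AL)^{-(M-1)}=\exp(-O(\Delta\, n^{d(1-\mu)}\log n))$, which for $d\ge 2$ is far weaker than the stated bound and, at the parameters of the application in Lemma~\ref{lem-function-lowerbound-hypermatrix} ($\mu=3/5$, $d\ge 4$), is worse than trivial. So the proof does not establish the theorem in the generality the paper needs; the sparsity hypothesis cannot be converted into a dimension-free bound on the number of terms.

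Under the literal ``one $j$ for all pairs'' reading, your remaining steps are sound (restriction to the line through the maximizing point $\theta^*$, Nazarov's inequality with ratio $|I|/|J|=L$, the endpoint estimates), and would even give the stronger bound $\exp(-O(\Delta\, n^{1-\mu}\log n))$ with no factor of $L$ --- modulo the cosmetic fix that when $\theta^*=0$ you should use $|Q(0)|$ rather than $|Q(1)|$. But that is a different, weaker theorem than the one the paper proves and uses. The paper's proof never controls $M$: it finds a short integer normal vector $b$ so that the tangent hyperplane $b\cdot x=m_0$ to the support $H$ meets at most one $+1$ and at most one $-1$ position (this needs only the per-pair Euclidean separation), substitutes $z_i=u^{b_i}$ (with a root-of-unity twist on one variable to handle the two-point case), and then lower-bounds the resulting one-variable polynomial by a maximum-modulus product argument. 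To salvage your route you would need a substitute for the bound on $M$ --- something playing the role of that tangent-facet argument --- since Nazarov's exponent scales with the number of distinct frequencies.
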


Note that when $\mu=0$, the sparse condition in Theorem~\ref{thm-sparsepolynomial-multivariate-lowerbound} is trivial, where we can get a lightly stronger result \[\max_{z_1,\ldots, z_d\in \gamma(L)}|h|\ge e^{-O(Ln \log n)},\] for any $L\geq 1$. When $L\geq C n^{\frac{1}{d-1}}$ for a large constant  $C$, this greatly improves the lower bound $e^{-O(L^d \log n)}$ in \cite[Lemma 22]{krishnamurthy2021trace} by Krishnamurthy et al.

 %which reduces to a corresponding result about general polynomials below. \textcolor[rgb]{0.00,0.00,1.00}{Here we remove the parameter $\Delta$ and the condition $L\le n^\Delta$, since $h$ is stronger than $1$-sparse, which leads to a slight modification on the proof of the above theorem. See details in Section 5.} %This stronger version was established by Chase \cite{chase2021separating} for single-variable polynomials.
%}
%
%\textcolor[rgb]{0.00,0.00,1.00}{\begin{corollary}\label{cor-polynomial-multivariate-lowerbound}
%Let $h(z_1,\ldots, z_d)= \sum_{k\in [n]^d} c_kz_1^{k_1}\cdots z_d^{k_d}$ be a nonzero polynomial with each $c_k\in \{0,\pm 1\}$. Then
%\[\max_{z_1,\ldots, z_d\in \gamma(L)}|h|\ge e^{-O(Ln \log n)}.\]
%\end{corollary}
%}
%\textcolor[rgb]{0.00,0.00,1.00}{\begin{remark}
%Compared with the previous result of Krishnamurthy et al. \cite[Lemma 22]{krishnamurthy2021trace}, that $\max_{z_1,\ldots, z_d\in \gamma(L)}|h|\ge e^{-O(L^d \log n)}$, we make an improvement when $L= \Omega(n^{\frac{1}{d-1}+\varepsilon})$ with $\varepsilon>0$.
%\end{remark}}

We further remark that as established by Chase \cite{chase2021separating}, there are  intriguing similarities between trace reconstruction and the $k$-deck problem\footnote{The $k$-deck problem for sequences is to find the minimum value of $k$ such that
we can reconstruct any binary sequence of length $n$ from the multi-set of
all subsequences of length $k$.}, including their proof methods. However, the best known upper bound on $k$ for the $d$-dimensional $k$-deck problem is $k=O(n^\frac{d}{d+1})$ \cite{zhang2024reconstruction}, which is close to trivial when $d$ grows. The problem whether an absolute constant $\alpha\in (0,1)$ exists such that  $k=O(n^\alpha)$ for all $d$ was raised in \cite{zhang2024reconstruction}. We wonder that whether our method for trace reconstruction is applicable to improve the upper bound to the  $k$-deck problem.

Our paper is organized as follows. In Section \ref{section2}, we first recall  the proof idea  of Nazarov and Peres \cite{nazarov2017trace} and Chase \cite{chase2021separating} for trace reconstruction of sequences, then we set up the general framework for trace reconstruction of (hyper)matrices, including the concepts of multidimensional generating function (identity), and the key tools for our trace reconstruction.  In Section 3, we design a dimension reduction procedure, which helps to analyse the recursive structure and sparsity of certain generating functions. In Section 4, we improve the previous upper bounds for trace reconstruction of matrices and cubes, that is Theorem~\ref{theorem-1} (1)-(2). In Section 5, we prove Theorem~\ref{thm-sparsepolynomial-multivariate-lowerbound} and Theorem~\ref{theorem-1} (3) for $d\ge 4$.

%set up necessary notations and.
%
% we introduce necessary notations, with two subsections that one introduces the previous ideas on trace reconstruction of sequences and the other develops multimensional generating function and generating identity of generalized version, the key tools for trace reconstruction. In Section 3, we design a dimension reduction procedure, which helps to apply generating functions. In Section 4, we improve the previous upper bounds for trace reconstruction of matrices and cubes. In Section 5, we improve the previous upper bounds for trace reconstruction of hypermatrices of arbitrary dimension $d\ge 4$.

\section{Preliminary}\label{section2}

In this section, we first briefly review the  proof framework of Nazarov and Peres \cite{nazarov2017trace} and Chase \cite{chase2021separating} for trace reconstruction of sequences. Then based on their ideas, we set up necessary notations and the general framework for trace reconstruction of (hyper)matrices.

For any integer $l$, let $[l]:=\{0,1,\ldots,l-1\}$ and $[-l,l]:=\{-l,-(l-1),\ldots,-1,0,1,\ldots,l\}$. For a number $a$ and a set $S$ of numbers, let $a+S:=\{a+s:s\in S\}$ and $aS=\{a\cdot s:s\in S\}$. Let $\gamma:= \{e^{i\theta}:-\pi\leq \theta\leq \pi\}$ be the unit circle in the complex plane, and $\gamma(L):= \{e^{i\theta}:-\pi/L\leq \theta\leq \pi/L\}$ for any $L\ge 1$. Numbers $p,q\in (0,1)$ are always fixed and satisfy $p=1-q$. By\cite[Eq.~(2.3)]{nazarov2017trace}, for any $z\in \gamma(L)$, the norm of $w=\frac{z-q}{p}$ always satisfies
\begin{align}\label{eq-estimation-w}
|w|\le \exp(O(L^{-2})). %\text{~~ when $z\in \gamma(L)$},
\end{align}
%\textcolor[rgb]{0.00,0.00,1.00}{In this paper, we always fix the deletion probability $q$ and the dimension $d$. We say $C$ is a constant if $C$ only depends on $q$ and $d$. For functions $f$ and $g$, we say $f=O(g)$ if $|f|=C|g|$ for some constant $C$ only depending on $q$ and $d$. So are $f=\Omega(g)$ and $f=\widetilde{O}(g)$.} {\color{red}(may not necessary)}

\subsection{Trace reconstruction of sequences}

To obtain the upper bound $\exp(O(n^{1/3}))$, Nazarov and Peres \cite{nazarov2017trace} used \emph{single-bit} statistics to distinguish between any two distinct sequences $x,y\in \{0,1\}^n$.  The main idea is to find a certain index $j$ such that the number of traces with a $1$ at this position will differ
substantially enough in expectation for $x$ and $y$. To establish the existence of such a $j$,  they used the following simple but useful relation between the entries of traces and the entries of the original sequence, %which we call \emph{a generating identity},
\begin{equation}\label{generating-identity-sequence}
  \begin{aligned}
\mathbb{E}[\sum_{j\geq 0}\widetilde{X}_jw^j]=p\sum_{k\geq 0}^{n-1}x_kz^k,
\end{aligned}
\end{equation}
which is valid for any $w,z\in \mathbb{C}$ satisfying $z=pw+q$.
Here $x=(x_0,\ldots,x_{n-1})\in \{0,1\}^n$ is the original sequence, $\widetilde{X}$ is a random trace of $x$ (pad $\widetilde{X}$ with zeros to the right) through the deletion channel with probability $q$.
Applying Eq. (\ref{generating-identity-sequence}) with $a=(a_0,\ldots,a_{n-1}):=x-y$, one obtains
\begin{equation}\label{genidentity-seq}
  \begin{aligned}
\mathbb{E}[\sum_{j\geq 0}(\widetilde{X}_j-\widetilde{Y}_j)w^j]=p\sum_{k=0}^{n-1}a_kz^k\triangleq p A(z).
\end{aligned}
\end{equation}
Then it suffices to show that the polynomial $A(z)$
is not too small for some $z$ close to $1$ on the unit circle, which will deduce the existence of $j$ satisfying $|\mathbb{E}[\widetilde{X}_j-\widetilde{Y}_j]|\geq \exp(-O(n^{1/3}))$ by pigeonhole principle.

\vspace{0.5cm}
To further improve the upper bound $\exp(O(n^{1/3}))$ to $\exp(\widetilde{O}(n^{1/5}))$, Chase \cite{chase2021separating} used the \emph{multiple-bit} statistics to distinguish sequences $x,y\in \{0,1\}^n$ when they coincide on a long prefix, that is, $y$ looks like $x$ from the beginning. Note that this is the hard case to distinguish $x$ and $y$.

The idea of \emph{multiple-bit} statistics is to find a set of $l$ indices, say $0\leq j_0<j_1<\cdots<j_{l-1}\leq n-1$, such that the number of traces with a certain subsequence, say $W\in \{0,1\}^l$, on these indices will differ
substantially enough in expectation for $x$ and $y$. To find such a set of indices,
 Chase generalized Eq. (\ref{genidentity-seq}) to the following similar but much more complicated identity\footnote{In this paper, we abuse the notations to make formulas tidy, whose meaning can be got from the content. },
%Consider the $W$-generating identity of $x-y$ for some $W\in \{0,1\}^l$:
\begin{equation}\label{idofchase}
  \begin{aligned}\mathbb{E}[\sum_j(1_{\widetilde{X}_j=W}-1_{\widetilde{Y}_j=W})w^{\odot j}]=p^l\sum_k(1_{x_k=W}-1_{y_k=W})z^{\odot k}\triangleq p^l B(z),
  \end{aligned}
\end{equation}
which is valid for any $w=(w_0,\ldots,w_{l-1}), z=(z_0,\ldots,z_{l-1})\in \mathbb{C}^l$ satisfying  $z_i=pw_i+q$, $0\leq i\leq  l-1$. Here, the two indices $j=(j_0,j_1,\ldots,j_{l-1})$ and $k=(k_0,k_1,\ldots, k_{l-1})$ of the two sums run over
\[\I(n,l):= \{(j_0,j_1,\ldots, j_{l-1}) \in \bbZ ^l: 0\le j_0< j_1< \cdots < j_{l-1}\le n-1 \},\]
%which is called \emph{the $l$-index set system of $n$-sequence},
which is the collection of all sets of $l$ positions among the $n$ positions. Then $x_k:= (x_{k_0},x_{k_1}, \ldots, x_{k_{l-1}})$, and $z^{\odot k}$ is defined as
\begin{align*}%\label{eq-operation}
z^{\odot k}:= z_0^{k_0}z_1^{k_1-k_0-1} \cdots z_i^{k_i-k_{i-1}-1} \cdots z_{l-1}^{k_{l-1}-k_{l-2}-1}.
\end{align*}
 When $l=1$ and $W=1$, Eq. (\ref{idofchase}) reduces to Eq. (\ref{genidentity-seq}).

 As before, one needs to show that $B(z)$ in Eq. (\ref{idofchase}) is not too small on certain point, which is a multi-variable power series. Note that when $k$ is a set of consecutive positions, for example, $k=k_0+[l]$, then $z^{\odot k}=z_0^{k_0}$ is single-variable. The following corollary from Chase \cite{chase2021separating} tells that the partial sum over such consecutive positions is enough to control the total sum.

%On the right hand side, $A(z)=\sum_k(1_{x_k=W}-1_{y_k=W})z^{\odot k}$ is called the $W$-generating function of $x-y$. As in the proof of Nazarov and Peres \cite{nazarov2017trace}, to distinguish between $\widetilde{X}$ and $\widetilde{Y}$, we want to show that the left hand side is not too small. Equivalently, we need to find a lower bound for $A(z)$.

\begin{lemma}[Corollary 6.4, Chase \cite{chase2021separating}]\label{lem-sequence-contiguous-function}
For any distinct $x,y\in \{0,1\}^n$ and any $p\in (0,1)$, if for some $W\in \{0,1\}^l$ with $l\le n$, some $z_0\in \gamma$ and some constant $C> 0$, we have
\begin{equation}\label{consebound}
  |B'(z_0)|:=|\sum_{k\ge 0}(1_{x_{k+[l]}=W}-1_{y_{k+[l]}=W})z_0^k|\ge \exp(-Cl \log^5 n),
\end{equation}
%{\color{red}with $n^\mu=\Omega(l \log n)$,}
then there are $z_1,\ldots, z_{l-1}\in [1-2p, 1]$ and a constant $C'> 0$ such that
\[|\sum_{k\in \I(n,l)}(1_{x_k=W}-1_{y_k=W})z^{\odot k}|\ge \exp(-C'l \log^5 n),\]
where $z=(z_0,z_1,\ldots, z_{l-1})$. %{\color{red}(will be generalized?)}
\end{lemma}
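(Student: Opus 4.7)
The plan is to identify $B'(z_0)$ with a specific slice of the multivariate generating function
\[
G(z_0, z_1, \ldots, z_{l-1}) := \sum_{k \in \I(n, l)} (1_{x_k=W} - 1_{y_k=W})\, z^{\odot k},
\]
and then transfer the lower bound from this slice to a real point in $[1-2p, 1]^{l-1}$. The initial observation is that, under the convention $0^0 = 1$, setting $z_1 = \cdots = z_{l-1} = 0$ in $z^{\odot k}$ annihilates every term indexed by a non-consecutive $k$, because then at least one exponent $k_i - k_{i-1} - 1$ is strictly positive; what survives is exactly the sum over consecutive tuples $k = (k_0, k_0 + 1, \ldots, k_0 + l - 1)$, so $G(z_0, 0, \ldots, 0) = B'(z_0)$, and the hypothesis reads $|G(z_0, 0, \ldots, 0)| \ge \exp(-Cl\log^5 n)$.

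When $p \ge 1/2$, the point $0$ already lies in $[1-2p, 1]$ and the conclusion follows immediately with $C' = C$. For the substantive case $p < 1/2$, I would lift one coordinate at a time. Given $z_1, \ldots, z_{i-1} \in [1-2p, 1]$ already chosen, consider the univariate polynomial $Q_i(z_i) := G(z_0, z_1, \ldots, z_{i-1}, z_i, 0, \ldots, 0)$, of degree at most $n - l$; by the same $0^0 = 1$ identification, $Q_i(0)$ coincides with the value retained from the previous stage. One then selects $z_i \in [1-2p, 1]$ at which $|Q_i(z_i)|$ drops from $|Q_i(0)|$ by at most a factor $\exp(O(\log^5 n))$. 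Iterating over all $l - 1$ coordinates produces the desired point with total multiplicative loss $\exp(O(l\log^5 n))$, absorbed by passing from $C$ to some larger constant $C'$.

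\textbf{The main obstacle} is the per-coordinate extrapolation step. A direct Chebyshev--Markov estimate on the degree-$(n-l)$ polynomial $Q_i$, extrapolating from $0$ (which lies outside $[1-2p, 1]$ when $p < 1/2$) to the interval itself, would incur a factor $T_{n-l}(1 - 1/p) = \exp(\Theta(n))$ per step, which far exceeds the allowed slack $\exp(O(l\log^5 n))$. The resolution is to avoid the naive Chebyshev bound and instead exploit the $\{-1, 0, +1\}$-coefficient structure of $G$ in its full monomial basis, together with an appropriate sparsity of its support inherited from the combinatorics of $(x, y, W)$. Rephrasing so that $Q_i$ reduces to (a scaled version of) a sparse $\{-1, 0, 1\}$-polynomial lets one apply the sparse Littlewood-type lower bound of \cite[Theorem 6.2]{chase2021separating}, the univariate analogue of Theorem~\ref{thm-sparsepolynomial-multivariate-lowerbound} of the present paper, delivering the per-step factor $\exp(O(\log^5 n))$ required.
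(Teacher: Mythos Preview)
Your identification $G(z_0,0,\ldots,0)=B'(z_0)$ is correct and is exactly how the argument begins. The gap is in the extrapolation step: invoking Chase's Theorem~6.2 (the sparse Littlewood bound, the univariate case of Theorem~\ref{thm-sparsepolynomial-multivariate-lowerbound}) is the wrong tool. That theorem lower-bounds the maximum of a specific class of polynomials over a short \emph{arc of the unit circle} $\{e^{i\theta}:|\theta|\le n^{-2\mu}\}$, and requires the very particular shape $1-\epsilon z^{d}+\sum_{j\ge n^\mu}c_jz^j$. Your polynomial $Q_i$ is being evaluated on the \emph{real interval} $[1-2p,1]$, and there is no reason for it to have that sparse form; the coefficients of $Q_i$ in $z_i$ are sums of many $\pm 1$'s indexed by tuples with a given gap $k_i-k_{i-1}-1$, not a single $\pm 1$ followed by a long run of zeros. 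So Theorem~6.2 simply does not apply, and the ``resolution'' you describe does not go through.

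The actual mechanism, visible in the paper's proof of the multivariate analogue Lemma~\ref{lem-hypermatrix-contiguous-function} (and in Chase's original argument), is different in two ways. First, one does not lift the coordinates one at a time; instead one sets $z_1=\cdots=z_{l-1}=t$ simultaneously, obtaining a single-variable polynomial $\bar g(t)$ of degree at most $n$ with $\bar g(0)=B'(z_0)$. Second, after dividing by $\binom{n}{l}$ so that all coefficients have modulus at most $1$, one applies the Borwein--Erd\'elyi--K\'os inequality \cite[Theorem~5.1]{borwein1999littlewood}, which gives
\[
\max_{t\in[1-2p,1]}|\bar h(t)|\ \ge\ |\bar h(0)|^{c_1/(2p)}e^{-c_2/(2p)}
\]
for any polynomial $\bar h$ with coefficients bounded by $1$. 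This is precisely a real-interval extrapolation result tailored to bounded-coefficient polynomials, and it converts the hypothesis $|B'(z_0)|\ge e^{-Cl\log^5 n}$ into $|G|\ge e^{-C'l\log^5 n}$ in one stroke, with the normalization factor $\binom{n}{l}\le e^{O(l\log n)}$ absorbed harmlessly. No sparsity of $Q_i$ is needed or used.
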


Because $x$ and $y$ coincide on a long prefix, the polynomial $B'(z_0)$ in Eq. (\ref{consebound}) could be very sparse for some carefully chosen $W$, i.e., a lot of zero coefficients for lower degrees.
Combining with the following result with $l= 2n^{1/5}, \mu=1/5$ and some complex analysis, Chase \cite{chase2021separating} gave a better lower bound $|\mathbb{E}[1_{\widetilde{X}_j=W}-1_{\widetilde{Y}_j=W}]|\ge \exp(-\widetilde{O}(n^{1/5}))$ for some $j$, and then $\exp(\widetilde{O}(n^{1/5}))$ traces suffice.

\begin{theorem}[Theorem 6.2, Chase \cite{chase2021separating}]\label{thm-sparsepolynomial-lowerbound}
For any $\mu\in (0,1)$, %let $\P_n^\mu$ be the collection of all polynomials
there is some constant $C> 0$ such that for any $n\ge 2$ and any polynomial $f(z)= 1- \epsilon z^d+ \sum_{j=n^\mu}^n c_jz^j$ with $1\le d< n^\mu, \epsilon\in \{0,1\}$ and $|c_j|\le 1$ for each $j$, it holds that
%for any $n\ge 2$, and any $f\in \P_n^\mu$, it holds that
%\[\max_{z\in \gamma( n^{-2\mu})}|f(z)|\ge \exp(-Cn^\mu \log^5n).\]
\[\max_{|\theta|\le n^{-2\mu}}|f(e^{i\theta})|\ge \exp(-Cn^\mu \log^5n).\]%{\color{red}(will be generalized?)}
\end{theorem}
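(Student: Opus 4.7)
The plan is to proceed by contradiction. Suppose that $\max_{|\theta|\le n^{-2\mu}}|f(e^{i\theta})| < e^{-Cn^\mu\log^5 n}$ for a large enough constant $C$, to be determined. The proof combines three ingredients: the trivial bound $|f(e^{i\theta})|\le n+1$ on the whole unit circle; a two-constants / harmonic-measure type inequality to propagate smallness from a short arc to a longer one at a logarithmic cost in the exponent; and the structural information encoded in $f$, namely $f(0)=1$ with all Taylor coefficients at degrees $1,\ldots,n^\mu-1$ equal to zero except possibly the single coefficient at degree $d$.

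The first main step I would carry out is an \emph{extension of smallness}. Applying the two-constants theorem that compares $\log|f|$ on the short arc $|\theta|\le n^{-2\mu}$, on the rest of the circle (where $\log|f|\le\log(n+1)$), and on a longer target arc, a single extension step roughly doubles the length of the arc on which $f$ is small while losing a factor on the order of $\log n$ in the exponent of the bound. Iterating this $O(\log n)$ times propagates the initial smallness hypothesis to a bound $|f(e^{i\theta})|\le e^{-C'n^\mu\log n}$ on an arc of length about $n^{-\mu}$. The $\log^5 n$ factor in the statement is calibrated precisely to absorb these compounded logarithmic losses together with a further logarithmic loss in the Jensen step below.

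The second main step is a \emph{zero count via Jensen's formula} on a disk $D$ of radius slightly less than $1$, positioned so that a large portion of its boundary lies inside the long arc from the previous step. The boundary bound from the extension, combined with $|f(0)|=1$, forces the number of zeros of $f$ inside $D$ to be at least of order $n^\mu$. I would then contradict this concentration using the structure of $f$: because $f(z)=1-\epsilon z^d+z^{n^\mu}g(z)$, the Taylor expansion of $\log f$ at the origin agrees with that of $\log(1-\epsilon z^d)$ through degree $n^\mu-1$. Newton's identities express these Taylor coefficients as the power sums $-\frac{1}{k}\sum_i z_i^{-k}$ of the zeros $z_i$ of $f$, so these power sums are forced to vanish unless $k$ is a multiple of $d$, and are of tightly controlled size when $k$ is such a multiple. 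A concentration of $\Omega(n^\mu)$ zeros in $D$ close to the unit circle, however, produces power sums $\sum_i z_i^{-k}$ of comparable magnitude across most $k\le n^\mu-1$, directly contradicting these rigid constraints.

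The principal technical obstacle is the calibration in the extension step. The bound $\log(n+1)$ available on the complement arc is what limits how efficiently smallness can be propagated, and naive subadditive estimates would leak far more than the allotted $n^\mu\log^5 n$ into the exponent. Keeping the cumulative loss under this target requires using the sharp Nevanlinna/Ahlfors form of the two-constants theorem and tuning the sequence of intermediate arc lengths carefully. A secondary difficulty is ensuring that most of the $\Omega(n^\mu)$ zeros produced by Jensen's formula actually cluster close to the unit circle rather than being scattered throughout the disk, which is handled by taking the Jensen disk radius to approach $1$ at an appropriate rate tied to the arc length in Step 1 and the sparsity exponent $\mu$.
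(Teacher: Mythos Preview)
This statement is not proved in the present paper; it is quoted as Theorem~6.2 of Chase~\cite{chase2021separating} and invoked as a black box (for instance in Case~2 of the proof of Lemma~\ref{lem-function-lowerbound-matrix}). There is therefore no in-paper proof to compare your proposal against.

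On its own merits, your proposal has a genuine gap in the third step. The propagation of smallness by the two-constants theorem and the subsequent Jensen zero count are plausible in outline, but the contradiction you extract from them is not justified. You assert that $\Omega(n^\mu)$ zeros of $f$ clustered in a small disk $D$ near the unit circle would force the power sums $p_k=\sum_i z_i^{-k}$ to be large for most $k<n^\mu$, contradicting the constraints $p_k=0$ for $d\nmid k$ that follow from the sparse Taylor structure of $f$. But $p_k$ is a sum over \emph{all} $n$ zeros of $f$, and your argument gives no control over the remaining $n-O(n^\mu)$ of them: each may well satisfy $|z_i|\approx 1$, and their aggregate contribution---potentially of order $n$---can completely swamp and cancel the $O(n^\mu)$-sized contribution from the cluster in $D$. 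Closing this would require either a Tur\'an-type lower bound showing that \emph{some} $p_k$ in the relevant range must be large regardless of such cancellation, or an independent argument confining the non-clustered zeros away from the unit circle so that their contribution to $p_k$ decays; neither ingredient is supplied, and this is precisely the delicate core of the theorem. Chase's actual proof does not go through zero-counting at all: it iterates an arc-lengthening comparison that works directly with $|f|$ on the circle, exploiting the factorization $f(z)=(1-\epsilon z^d)\bigl(1+z^{\lceil n^\mu\rceil}\tilde g(z)\bigr)$ quantitatively at each stage.
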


%Consequently, for some $j\in \I(n,l)$ we have
%\[|\mathbb{E}[1_{\widetilde{X}_j=W}-1_{\widetilde{Y}_j=W}]|\ge \exp(-\widetilde{O}(n^{1/5})).\]
%Finally, a Hoeffding inequality argument shows that $x$ and $y$ can be distinguished with high probability with $\exp(\widetilde{O}(n^{1/5}))$ traces.
%%\end{proof}

%\subsection{Generating Function and Generating Identity}

\subsection{Set up for higher dimensions}

In this subsection, we set up necessary notations and preliminary results for trace reconstruction of (hyper)matrices. For a fixed integer $d\geq 2$, we write $\{0,1\}^{n^{\times d}}$ instead of $\{0,1\}^{n\times \cdots \times n}$ for convenience, each element of which is an $n^{\times d}$-hypermatrix. We call $n$ the length and $d$ the dimension.% of the hypermatrix.

For $X\in \{0,1\}^{n^{\times d}}$, the position of every entry is indexed by an element $k=(k_1,k_2,\ldots,k_d)\in [n]^d$, and this entry is denoted by $X_{k}$.
%As a multidimensional generalization of a sequence, we say an array $X\in \{0,1\}^{n^{\times d}}$ is a \emph{hypermatrix}, which spans a $d$-dimensional space and is of length $n$ along each dimension. Every entry $X_{k_1\cdots k_d}\in X$ is indexed by $d$ numbers $0\le k_i\le n-1$ for $1\le i\le d$. When $d=2$, we say $X\in \{0,1\}^{n\times n}$ is a \emph{matrix}; when $d=3$, we say $X\in \{0,1\}^{n\times n\times n}$ is a \emph{cube}.
For a  sub-hypermatrix of $X$  with length $l$ projected to the first $r$ dimensions, we can also define its position by a single notation, still using $k=(k_1,k_2,\ldots, k_d)$, where $k_i= (k_{i0},k_{i1},\ldots, k_{i,l-1})\in \I(n,l)$ for $1\le i\le r$ and $k_i\in [n]$ for $r+1\le i\le d$.
%\begin{enumerate}[(1)]
%  \item for $1\le i\le d-r$, $k_i\in [n]$; and
%  \item for $d-r+1\le i\le d$, $k_i= (k_{i0},k_{i1},\ldots, k_{i,l-1})\in \I(n,l)$.
%\end{enumerate}
Then the sub-hypermatrix is also denoted by $X_k$.
Let $\I(n^{\times d},l^{\times r})$ be the collection of all such positions $k$ for fixed  $0< l\le n$ and $ 0\le r\le d$.
%
%For integers $0< l\le n, 0\le r\le d$, we say $\I(n^{\times d},l^{\times r})$ is \emph{the $l^{\times r}$-index set system of $n^{\times d}$-hypermatrix}, if $\I(n^{\times d},l^{\times r})$ consists of all index sequences $(k_1,k_2,\ldots, k_d)$ satisfying
%\begin{enumerate}[(1)]
%  \item for $1\le i\le d-r$, $k_i\in \bbZ$ and $0\le k_i\le n-1$;
%  \item for $d-r+1\le i\le d$, $k_i= (k_{i0},k_{i1},\ldots, k_{i,l-1})\in \I(n,l)$.
%\end{enumerate}
%Clearly, $\I(n^{\times d},l^{\times r})$ is the collection of all sets of positions of  $l^{\times r}$-subhypermatrices located in the latter $r$ dimenisons of an $n^{\times d}$-hypermatrix.
%, i.e., the collection of all positions in an $n^{\times d}$-hypermatrix.
For example, when $X\in \{0,1\}^{n^{\times 2}}$ and $k=((1,4,5),2)$, $X_k=(X_{(1,2)},X_{(4,2)},X_{(5,2)})$. When $X_k$ is a block, then each of the $r$ former $k_i$'s for $1\le i\le r$ must be of the form $k_{i0}+[l]$. % be a set of $l$ consecutive integers, where we simply denote $k$ by $k+[l]^r$,% in its position must be of the form $k_i+[l]$ for each $d-r+1\le i\le d$. To abusing notation, we denote its position by $k+[l]^r$,
In this case, we simply denote the position by $k+[l]^r$ with $k=(k_1,k_2,\ldots, k_d)\in [n]^d$ indicating the first corner entry's position. Note that when $l=1$ or $r=0$, $\I(n^{\times d},1)=[n]^d$.

Similarly for $0\leq r \leq d$, let $\bbC(d,l^{\times r})=\bbC^l\times\cdots  \times\bbC^l \times \bbC \times \cdots \times\bbC $, where $\bbC^l$ repeats $r$ times and $\bbC$ repeats $d-r$ times.  So any $(z_1,\ldots,z_d)\in \bbC(d,l^{\times r})$ satisfies $z_i=(z_{i0},\ldots, z_{i,l-1})\in \bbC^l$ for $1\le i\le r$ and $z_i\in \bbC$ for $r+1\le i\le d$. When $l=1$ or $r=0$, $\bbC(d,1)=\bbC^d$.

Now, for a $W\in \{0,1\}^{l^{\times r}}$ and a  hypermatrix $X\in \{0,1\}^{n^{\times d}}$, we  define the \emph{$W$-generating function of $X$} as
\[g(z)=\sum_k 1_{X_k=W}z_1^{\odot k_1}\cdots z_{r}^{\odot k_{r}}\cdot z_{r+1}^{k_{r+1}}\cdots z_d^{k_d},\]
where $k= (k_1,\ldots, k_d)$ runs through $\I(n^{\times d},l^{\times r})$, and $z=(z_1,\ldots,z_d)\in \bbC(d,l^{\times r})$.
% for $1\le i\le d-r$, $z_i\in \bbC$; for $d-r+1\le i\le d$, $z_i=(z_{i0},\ldots, z_{i,l-1})\in \bbC^l$ is an $l$-tuple of variables.
In fact, this is a polynomial over $\bbC$ with $rl+ d-r$ variables.
When $d=1$, $r=0$ and $W=1$, it reduces to $A(z)$ in Eq. (\ref{genidentity-seq}).
 When $d=r=1$, it reduces to $B(z)$ in Eq. (\ref{idofchase}).  Note that when $r< d$, this function is determined by all $l^{\times r}$-subhypermatrices inside the first $r$ dimensions of $X$. For instance, when $d=2$ and $r=1$, $g$ is the generating function of columns in the matrix $X$.
% rows of length $l$ in the matrix $X$.

Accordingly, we have the following  \emph{$W$-generating identity of $X$}, for any $W\in \{0,1\}^{l^{\times r}}$ and $X\in \{0,1\}^{n^{\times d}}$, which generalizes Eq. (\ref{genidentity-seq})
and  Eq. (\ref{idofchase}),
\begin{equation}\label{wgeriden}
  \mathbb{E}[\sum_j 1_{\widetilde{X}_j=W} w_1^{\odot j_1}\cdots w_{r}^{\odot j_{r}}\cdot w_{r+1}^{j_{r+1}}\cdots w_d^{j_d}]=p^{rl +d-r}g(z).
\end{equation}
Here $j= (j_1,\ldots, j_d)$ runs through $\I(n^{\times d},l^{\times r})$, and $w=(w_1,\ldots,w_d)\in \bbC(d,l^{\times r})$ satisfies $z=pw+q$, i.e., each pair of corresponding entries in $z$ and $w$ satisfies this relation. From now on, all appeared $w$ accompanied by $z$ satisfy this relation.
We move the proof  of Eq. (\ref{wgeriden}) to Appendix.% due to similarity.

To state the general framework of trace reconstruction of hypermatrices, we define a function $\ell(w)=\max\{|w|,1\}$ for all $w\in \bbC$. Then for simplicity, no matter $w\in \bbC$, or in $\bbC^l$, or in $\bbC(d,l^{\times r})$, we define $\ell(w)$ to be the product of $\ell$-values of all complex numbers in $w$.
%  \[\ell(w)=
%    \begin{cases}
%    |w|,& \text{if $|w|\ge 1$}\\
%    1,&   \text{if $|w|< 1$}
%    \end{cases}
%  \]When $w=(w_0,w_1,\ldots, w_{l-1})\in \bbC^l$, define $\ell(w)= \prod_{i=0}^{l-1} \ell(w_i)$. Further, when $w=(w_1,\ldots, w_{d})\in \bbC(d,l^{\times r})$, then $\ell(w)= \prod_{i=1}^{d} \ell(w_i)$.
%\end{enumerate}
\begin{lemma}\label{lem-reduceto-Wgen-function} For any deletion probability $q\in (0,1)$, and
 any hypermatrix $X\in \{0,1\}^{n^{\times d}}$ through the deletion channel, let $m= \Omega(\log n)$ be a positive integer. If for any other $Y\in \{0,1\}^{n^{\times d}}$, there is some  $W\in \{0,1\}^{l^{\times r}}$ with $l=O(m/\log n)$ and $ 0\le r\le d$ such that, the $W$-generating function\footnote{The $W$-generating function of $X-Y$ is the one of $X$ minus the one of $Y$.
} $g$ of $X-Y$ satisfies % with $w_i=\frac{z_i-q}{p}, 1\le i\le d$ and we have
\[|g(z)|\ge \exp(-Cm) \text{ for some $w\in \bbC(d,l^{\times r})$ with } \ell(w)\le \exp({Cm}/{n}),\] for some  constant $C> 0$,
then $\exp(O(m))$ traces suffice to reconstruct $X$ with high probability.% {\color{blue} (revise the proof to find the same $C$)(has been revised)}
\end{lemma}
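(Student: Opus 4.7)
The plan is to convert the generating-function lower bound in the hypothesis into a Hoeffding-style distinguishing test, one per candidate $Y\ne X$. The $W$-generating identity~\eqref{wgeriden} produces, for each $Y$, a complex linear statistic of a single trace whose expectation separates $X$ from $Y$ by at least $\exp(-O(m))$; a standard concentration-plus-union-bound argument then yields the $\exp(O(m))$ trace bound.

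Concretely, for each $Y\ne X$, let $(W,r,l)$ and $w\in\bbC(d,l^{\times r})$ with $\ell(w)\le \exp(Cm/n)$ be the data promised by the hypothesis, and set $z=pw+q$. Define the complex statistic
\[
S_Y(\widetilde X):=\sum_{j\in\I(n^{\times d},l^{\times r})}1_{\widetilde X_j=W}\,w_1^{\odot j_1}\cdots w_r^{\odot j_r}\,w_{r+1}^{j_{r+1}}\cdots w_d^{j_d}.
\]
Applying~\eqref{wgeriden} to $X$ and to $Y$ separately and subtracting,
\[
\mathbb{E}[S_Y(\widetilde X)-S_Y(\widetilde Y)]=p^{rl+d-r}\,g(z),
\]
whose modulus is at least $\exp(-O(m))$ since $p$ is a constant and $rl+d-r=O(m/\log n)$. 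Each monomial obeys $|w_1^{\odot j_1}\cdots w_d^{j_d}|\le \ell(w)^n\le \exp(Cm)$ because the total exponent on any single coordinate of $w$ is at most $n$, and $|\I(n^{\times d},l^{\times r})|\le n^{rl+d}=\exp(O(m))$ using $l\log n=O(m)$ and constant $d$. Consequently $|S_Y(\widetilde X^{(t)})|\le M$ pointwise with $M=\exp(O(m))$.

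Given $T$ independent traces, form $\hat S_Y:=T^{-1}\sum_{t=1}^{T}S_Y(\widetilde X^{(t)})$. Applying Hoeffding's inequality to the real and imaginary parts of the bounded variables $S_Y(\widetilde X^{(t)})-\mathbb{E}[S_Y(\widetilde X)]$ yields
\[
\Pr\!\bigl[\,|\hat S_Y-\mathbb{E}[S_Y(\widetilde X)]|>\epsilon\,\bigr]\le 4\exp\!\bigl(-\Omega(T\epsilon^2/M^2)\bigr).
\]
Choose $\epsilon$ equal to one third of the mean gap so that $\epsilon=\exp(-O(m))$. Then taking $T=\exp(O(m))$ sufficiently large makes each failure probability at most $2^{-n^d-1}$, using $n^d\le \exp(O(m))$ from $m=\Omega(\log n)$. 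A union bound over the $2^{n^d}$ candidates $Y$ shows that with high probability $|\hat S_Y-\mathbb{E}[S_Y(\widetilde X)]|<\epsilon$ for every $Y$, and hence $|\hat S_Y-\mathbb{E}[S_Y(\widetilde Y)]|>2\epsilon$ for every $Y\ne X$. The algorithm outputs the unique $Y^{\ast}$ with $|\hat S_{Y^{\ast}}-\mathbb{E}[S_{Y^{\ast}}(\widetilde{Y^{\ast}})]|\le 2\epsilon$, which must be $X$.

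The main obstacle is balancing the three quantitative inputs to Hoeffding, since the trace count scales like $n^d\cdot M^2/\epsilon^2$: one simultaneously needs $|g(z)|\ge \exp(-O(m))$ (to control $\epsilon$) together with $\ell(w)^n\le \exp(O(m))$ and $n^{rl}\le\exp(O(m))$ (to control $M$). The last two force exactly the quantitative hypotheses $\ell(w)\le \exp(Cm/n)$ and $l=O(m/\log n)$ appearing in the statement, and they motivate the dimension-reduction and Littlewood-type machinery developed in the later sections.
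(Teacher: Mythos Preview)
Your proof is correct and follows the paper's overall strategy: the $W$-generating identity converts the hypothesis into a mean separation, and Hoeffding plus a union bound over all $Y$ gives the trace count. The one place you diverge is in how Hoeffding is invoked. The paper first applies the triangle inequality and pigeonhole to the identity
\[
\exp(-O(m))\le \sum_{j}|\mathbb{E}[1_{\widetilde X_j=W}-1_{\widetilde Y_j=W}]|\,\ell(w)^n
\]
to extract a \emph{single} position $j\in\I(n^{\times d},l^{\times r})$ with $|\mathbb{E}[1_{\widetilde X_j=W}-1_{\widetilde Y_j=W}]|\ge\exp(-O(m))$; the pairwise test then uses only the $[0,1]$-valued indicator $1_{\widetilde U_j=W}$, so Hoeffding runs with range $1$. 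You instead keep the full weighted sum $S_Y$ as the test statistic and bound its range by $M=|\I(n^{\times d},l^{\times r})|\cdot\ell(w)^n=\exp(O(m))$, then absorb the extra $M^2$ factor into the $\exp(O(m))$ trace count. Both routes are standard and yield the same bound; the pigeonhole version keeps the Hoeffding constants absolute, while yours avoids having to isolate a single index. One cosmetic point: your output rule implicitly requires a statistic $S_X$ for the case $Y=X$, which the hypothesis does not supply; it is cleanest to phrase the algorithm, as the paper does, as a tournament that outputs the unique candidate winning every pairwise comparison.
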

%we fail to distinguish between $X$ and $Y$ from $T$ traces with probability at most $\exp(-T\exp (-Cm_1))$ for some constant $C> 0$.
\begin{proof} By Eq. (\ref{wgeriden}), we
write the $W$-generating identity of $X-Y$:
\[\mathbb{E}[\sum_j (1_{\widetilde{X}_j=W}-1_{\widetilde{Y}_j=W}) w_1^{\odot j_1}\cdots w_{r}^{\odot j_{r}}\cdot w_{r+1}^{j_{r+1}}\cdots w_d^{j_d}]=p^{rl+ d-r}
g(z).\]
For the right hand side, $|p^{rl+ d-r} g(z)|\ge \exp(-O(l+m))\ge \exp(-O(m))$. Then by the identity,
\begin{align*}
\exp(-O(m))
&\le |\mathbb{E}[\sum_j (1_{\widetilde{X}_j=W}-1_{\widetilde{Y}_j=W}) w_1^{\odot j_1}\cdots w_{r}^{\odot j_{r}}\cdot w_{r+1}^{j_{r+1}}\cdots w_d^{j_d}]|\\
&\le \sum_j |\mathbb{E}[1_{\widetilde{X}_j=W}-1_{\widetilde{Y}_j=W}]|\ell(w)^n\\
&\le \sum_j |\mathbb{E}[1_{\widetilde{X}_j=W}-1_{\widetilde{Y}_j=W}]|\exp(Cm).
\end{align*}
By Pigeonhole Principle, we can find some $j\in \I(n^{\times d},l^{\times r})$ and some constant $C_1> 0$ such that
\[|\mathbb{E}[1_{\widetilde{X}_j=W}-1_{\widetilde{Y}_j=W}]|\ge  \exp(-C_1m).\]

Suppose that $T$ i.i.d. traces $\widetilde{U}^1,\ldots,\widetilde{U}^T$ of the original hypermatrix $X$ are observed. We establish a simple algorithm that on input $T$ traces and a pair $(X,Y)$, outputs $X$ if
\[|\frac{1}{T}\sum_{t=1}^{T}\widetilde{U}_j^t-\mathbb{E}_X[\widetilde{X}_j]|<|\frac{1}{T}\sum_{t=1}^{T}\widetilde{U}_j^t-\mathbb{E}_Y[\widetilde{Y}_j]|\]
for the index set $j$ fixed above. By the union bound and Hoeffding's inequality,

\[\mathbb{P}_X[X~\text{cannot be recovered}]\le \sum_{Y\neq X}\mathbb{P}_X[Y~\text{is output from}~(X,Y)]\le 2^{n^d}\exp(-\frac{T}{2}\exp(-2C_1m)),\]
Taking $T= \exp(C_2m)$ with $C_2> 2C_1+ \frac{d\log n}{m}$ makes the above probability tend to $0$.
\end{proof}
\vspace{0.3cm}

Lemma~\ref{lem-reduceto-Wgen-function} reduces the trace reconstruction problem to the existence of a smaller hypermatrix $W$ such that the $W$-generating function of $X-Y$ has a big value at some restricted point.  Next, we give a useful result about lower bounding a complex polynomial, which will be repeatedly used to derive lower bounds of $W$-generating functions.%. We can iteratively apply it on the functions in Property (a) of Lemma~\ref{lem-construction-Wgen-function}, deriving a lower bound of the $W$-generating function.

\begin{lemma}\label{lem-polynomial-lowerbound}
Let $m$ be a sufficiently large integer and $a=a(m)\in (0,1]$. For a polynomial $f(z)= z^{m_0}(c_0+c_1z+ \cdots)\in \bbC[z]$ with  $m_0\le m$, if $|c_0|\ge e^{-O(m^a)}$ and $\sum_i|c_i|\le e^{O(m^a)}$, then there exist some $w\in \bbC$ with $|w|\le 1$ and some constant $C> 0$, such that
\[|f(z)|\ge \exp(-Cm^{\frac{1+2a}{3}}), \text{ where $z=pw+q$}.\]
In particular, when $|c_0|=1$ and $|c_i|\le 1$, there exists some $w$ with $|w|\le 1$, such that $|f(z)|\ge \exp(-O(m^{1/3}))$. %{\color{red} (is it better to change $m$ to $l$? since when applying it, it is usually the value of $l$, see in Lemma 4.1)}
\end{lemma}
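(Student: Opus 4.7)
The plan is to execute a Nazarov--Peres-style balancing on a short arc of the unit circle in the $w$-variable. Factor $f(z)=z^{m_0}g(z)$ with $g(z)=c_0+c_1z+\cdots$, and set $w=e^{i\theta}$ with $|\theta|\le \pi/L$ for a parameter $L\ge 1$ to be optimized; then $|w|=1$, and $z=pe^{i\theta}+q$ lies on the boundary of the image disk $\{pw+q:|w|\le 1\}$. A direct calculation gives $|z|^2 = p^2+2pq\cos\theta+q^2 = 1-2pq(1-\cos\theta)\ge 1-Cpq\theta^2$, so uniformly on the arc $|z|^{m_0}\ge \exp(-Cm\theta^2)\ge \exp(-Cm/L^2)$.

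The heart of the argument is a Littlewood / Borwein--Erd\'elyi-type lower bound for $g$ on the short arc $\Gamma_L=\{pe^{i\theta}+q:|\theta|\le \pi/L\}$. Combining the lower bound on the Taylor coefficient $|c_0|\ge e^{-Cm^a}$ with the $L^\infty$ bound $\max_{|z|\le 1}|g(z)|\le \sum_i|c_i|\le e^{Cm^a}$ via a Hadamard-three-circle / harmonic-measure argument on the unit disk, one obtains
\[
\max_{z\in \Gamma_L} |g(z)| \;\ge\; \exp\!\bigl(-C\, L\cdot \log(\textstyle\sum_i|c_i|/|c_0|)\bigr) \;\ge\; \exp(-CLm^a).
\]
This is the single-variable precursor of Theorem~\ref{thm-sparsepolynomial-multivariate-lowerbound} and is available via the methods of \cite{borwein1997littlewood}. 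Multiplying by the bound on $|z|^{m_0}$ gives $|f(z)|\ge \exp(-Cm/L^2-CLm^a)$, and choosing $L=m^{(1-a)/3}$ balances the two exponents, each becoming $m^{(1+2a)/3}$; this yields the desired $|f(z)|\ge \exp(-Cm^{(1+2a)/3})$ at some $w=e^{i\theta}$, which satisfies $|w|=1\le 1$. The special case $|c_0|=1$, $|c_i|\le 1$ corresponds to $a=0$, $L=m^{1/3}$, with any residual $\log m$ factors from $\sum |c_i|\le \deg(g)+1$ absorbed into $O(m^{1/3})$, recovering the classical Nazarov--Peres bound $\exp(-Cm^{1/3})$.

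The main obstacle is the Littlewood-type lower bound on $\Gamma_L$: one must convert a lower bound on the Taylor coefficient $c_0$ at $z=0$ into a lower bound on the $L^\infty$-norm of $g$ along a short arc that stays near $z=1$. Since $\Gamma_L$ does not pass through the origin, $c_0$ is not directly visible on it; the remedy is to apply Jensen's formula (equivalently, the two-constants theorem) to $\log|g|$ on the unit disk, with the harmonic measure of $\Gamma_L$ at the origin being of order $1/L$ and the loss per unit of such harmonic distance being $\log(\max|g|/|c_0|)=O(m^a)$. This produces precisely the $L\cdot m^a$ contribution that, when balanced against $m/L^2$, gives the claimed exponent.
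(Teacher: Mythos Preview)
Your balancing strategy and the choice $L=m^{(1-a)/3}$ are exactly right and match the paper. The substantive difference is \emph{where} you lower-bound $g$, and this is where your proposal leaves a real gap.

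You insist on $|w|=1$, which forces you onto the interior arc $\Gamma_L=\{pe^{i\theta}+q:|\theta|\le\pi/L\}$ in the $z$-disk. The paper instead allows $|w|\le 1$ and works on the arc $\rho\gamma(L)=\{\rho e^{i\theta}:|\theta|\le\pi/L\}$ with $\rho=1-\tfrac{7}{pL^2}$, an arc of a circle \emph{centred at the origin}. This is the key simplification: with $F(z)=\prod_{1\le a\le L}f_1(ze^{2\pi i a/L})$ one has $F(0)=c_0^{\,L}$, so by the maximum modulus principle some $z'\in\rho\gamma$ satisfies $|F(z')|\ge |c_0|^L\ge e^{-O(Lm^a)}$; since each of the $L$ factors is at most $\sum_i|c_i|\le e^{O(m^a)}$, one rotate of $z'$ lying in $\rho\gamma(L)$ has $|f_1|\ge e^{-O(Lm^a)}$. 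Then $\rho^{m_0}\ge (1-\tfrac{7}{pL^2})^m\ge e^{-O(m/L^2)}$ plays the role of your $|z|^{m_0}$ bound, and a two-line calculation (Eq.~(\ref{eq-estimation-w-2})) checks that $z\in\rho\gamma(L)$ implies $|w|\le 1$. No potential theory enters.

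Your route, by contrast, rests on the assertion that ``the harmonic measure of $\Gamma_L$ at the origin is of order $1/L$.'' Since $\Gamma_L$ is not a boundary arc of the unit disk, this only makes sense in the slit domain $D\setminus\Gamma_L$, and the required lower bound $\omega(0,\Gamma_L;D\setminus\Gamma_L)\ge c/L$ is a genuine potential-theoretic estimate for a curve that is merely \emph{tangent} to $\partial D$ at $z=1$ (normal depth $\sim 1/L^2$, tangential width $\sim 1/L$). It does not follow from Hadamard three circles (which concerns concentric circles) or the Borwein--Erd\'elyi results you cite (which concern arcs \emph{on} the unit circle); and if you try the rotation trick in the $w$-disk instead, you access $g(q)$ rather than $g(0)=c_0$, for which you have no lower bound. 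The claim is plausible and likely provable, but as stated it is the crux and is asserted, not argued. The paper sidesteps the issue entirely by moving the arc to a circle centred at $0$.

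For the special case $|c_0|=1$, $|c_i|\le 1$, the paper again writes $z=\rho z'$ with $z'\in\gamma$ and applies Borwein--Erd\'elyi (Corollary~3.2) to the polynomial $\sum \rho^i c_i (z')^i$, whose coefficients still have modulus $\le 1$; this gives $e^{-O(L)}$ without the logarithmic loss that your $a\to 0$ reduction would incur.
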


\begin{proof}
Denote $f(z)= z^{m_0} f_1(z)$. Then $|f_1(z)|\le \sum_i|c_i|\le e^{O(m^a)}$ for any $z$ with $|z|\le 1$. For some positive integer $L$, define a polynomial
\[F(z)= \prod_{1\le a\le L} f_1(ze^{2\pi i\frac{a}{L}}).\]
By the maximum modulus principle, there exists some $z'\in \rho\gamma$ with $\rho= 1-\frac{7}{pL^2}$ such that
\[|F(z')|\ge |F(0)|= |f_1(0)|^L= |c_0|^L\ge e^{-O(m^a L)}.\]%:=\rho\{e^{i\theta}:-\pi/L\leq \theta\leq \pi/L\}, which is still denoted by $z$
We may choose $a$ in $\{1,2,\ldots,L\}$ such that $z:=z'e^{2\pi i\frac{a}{L}}\in \rho\gamma(L)$. Fixing this $z$, we have
\[e^{-O(m^a L)}\le |F(z')|\le |f_1(z)|\cdot e^{O(m^a (L-1))}.\]
Then $|f_1(z)|\ge e^{-O(m^a L)}$. Taking $L= \lceil\frac{4}{p}m^{\frac{1-a}{3}}\rceil$, we have
\[|f(z)|= \rho^{m_0}|f_1(z)|\ge (1-\frac{7}{pL^2})^{m}\cdot \exp(-O(m^a L))\ge \exp(-Cm^{\frac{1+2a}{3}})\]
for some constant $C> 0$.%some $w$ with $|w|\le 1$ by (\ref{eq-estimation-w-2}) and

It is left to show that the corresponding $w=\frac{z-q}{p}$ satisfies $|w|\le 1$, which is true by the following fact:
%We have another estimation on $w=\frac{z-q}{p}$ that
\begin{align}\label{eq-estimation-w-2}
|w|\le 1, \text{~~ whenever $z\in \rho \gamma(L)$ with $L\ge \frac{4}{p}$ and $\rho= 1-\frac{7}{pL^2}$}.
\end{align}
Indeed, since $z=\rho e^{i\theta}$ with $|\theta|\le \pi/L$, we have
\begin{align*}
|w|^2&=|1+\frac{\rho e^{i\theta}-1}{p}|^2\leq 1+\frac{2}{p}(\rho-1)+\frac{1}{p^2}|\rho e^{i\theta}-1|^2= 1+\frac{2}{p}(\rho-1)+\frac{1}{p^2}(\rho^2- 2\rho\cos\theta+ 1)\\
&\leq 1+\frac{2}{p}(\rho-1)+\frac{1}{p^2}((\rho-1)^2+\theta^2)\leq (1+\frac{\rho-1}{p})^2+\frac{\pi^2}{p^2L^2}< 1.
\end{align*}

When $|c_0|=1$ and $|c_i|\le 1$, we prove the result slightly differently. Write $z=\rho z'\in \rho\gamma$ with the same  $\rho= 1-\frac{7}{pL^2}$. Then $f(z)= z^{m_0} f'(z')$, where $f'(z')= c_0+\rho c_1z'+\cdots +\rho^ic_i (z')^i+\cdots$ with $|\rho^ic_i|\le 1$. Apply Corollary 3.2 of Borwein and Erd\'{e}lyi \cite{borwein1997littlewood} to get $|f'(z')|\ge e^{-O(L)}$ for some $z'\in \gamma(L)$. Fixing $z'$ and taking $L= m^{1/3}$, we have
\[|f(z)|= \rho^{m_0}|f'(z')|\ge (1-\frac{7}{pL^2})^m\cdot \exp(-O(L))\ge \exp(-O(m^{1/3})).\]
The corresponding $w=\frac{z-q}{p}$ satisfies  $|w|\le 1$  by (\ref{eq-estimation-w-2}). The proof is completed.
\end{proof}
\vspace{0.3cm}

Before closing this section, we introduce the so-called $W$-contiguous generating function. For any distinct hypermatrices $X,Y\in \{0,1\}^{n^{\times d}}$ and any hypermatrix $W\in \{0,1\}^{l^{\times d}}$ with $l\le n$, we define the \emph{$W$-contiguous generating function of $X-Y$} by
\begin{equation}\label{con-gen-fun}
  h(z_1,\ldots, z_d)= \sum_{k}[1_{X_{k+[l]^d}=W}-1_{Y_{k+[l]^d}=W}]z_1^{k_1}\cdots z_d^{k_d},
\end{equation}
where $k= (k_1,\ldots, k_d)$ runs through $[n]^d$.
Note that $W$ is of the same dimension $d$ as $X,Y$ but of a shorter length. When $d=1$, the function $h$ reduces to the function $B'$ in Eq. (\ref{consebound}).

\section{Dimension reduction}

Recall that Chase \cite{chase2021separating} applied the $W$-generating identity to distinguish sequences $x$ and $y$ which coincide each other on a prefix of length  $\Theta (n^{1/5})$, that is,  $\Theta (n^{1/5})$ contiguous zeros at the beginning of $a=x-y$, by establishing the existence a powerful $W$-generating function; for the other case, that is, $x$ does not look like $y$ at the beginning,  they can be easily distinguished by a trick of Peres and Zhai \cite{peres2017average}. It is a natural thought to apply this case-by-case discussion to hypermatrices. However, the criterion for a similar classification becomes complicated.

%\begin{problem}
%Does the difference of any two distinct matrices/hypermatrices either have a powerful $W$-generating identity, or suit for the trick of Peres and Zhai in \Cref{sequence-reconstruction-2}?
%\end{problem}

To make an effective classification, we design a dimension reduction procedure on $A=X-Y\in \{0,\pm1\}^{n^{\times d}}$, to measure the similarities between $X$ and $Y$. Roughly speaking, we reduce $A$ to a sub-hypermatrix with one dimension lower, one can view it as a slice of $A$,  which separates the thinnest all-zero part of $A$ from the other part. Repeating this procedure on the sub-hypermatrix iteratively until the dimension reduces to zero. Then the thicknesses of the all-zero parts ($\lambda_i$'s below) in this reduction will be our criterion for a classification. This procedure will provide a nice structure of all related generating functions, and enable us to analyse bounds of those functions case-by-case. We give a formal definition of dimension reductions below.

% we divide the $d$ dimensions into two parts. We roughly say, two hypermatrices are more identical along the dimensions in the first part, where we will apply $W$-generating identity. On the other hand, they are less identical along the dimensions in the second part, appropriate for the trick of Peres and Zhai.
%
%we repeat dimension reorder on $[d]$ and element reversion on \textcolor{blue}{$[0,n-1]$}, to obtain a subhypermatrix one dimension lower iteratively. Each subhypermatrix is on a critical position to separate out an all-zero part of the last hypermatrix. Consequently, we divide $d$ dimensions into two parts. We roughly say, two hypermatrices are more identical along the dimensions in the first part, where we will apply $W$-generating identity. On the other hand, they are less identical along the dimensions in the second part, appropriate for the trick of Peres and Zhai.

%
%In this section, to find a lower bound of the $W$-generating function, we will design a dimension reduction procedure. This provides a nice construction of the function, so that we can make a classification as Chase did.

For any hypermatrices $A\in \{0,\pm 1\}^{n^{\times d}}$,
%Given distinct hypermatrices $X,Y\in \{0,1\}^{n^{\times d}}$ with difference $A= X-Y$,
let $f$ be the generating function of $A$, that is $f(z_1,z_2,\ldots,z_d)=\sum_{k} A_{k} z_1^{k_1}\cdots z_d^{k_d}$, where $k=(k_1,\ldots,k_d)$ runs over $[n]^d$. Now, we define a sequence of hypermatrices $A^d=A, A^{d-1}, \ldots, A^1, A^0=\pm 1$, associated with a sequence of integers $\lambda_d\leq \lambda_{d-1}\leq \ldots\leq \lambda_1$ as follows. Each  $A^i$ will be a nonzero sub-hypermatrix of $A$ on some position $(k_1,\ldots,k_d)$ in $\I(n^{\times d},n^{\times i})$ for $0\leq i\leq d$, but we index its entry position only using the first $i$ parameters $(k_1,\ldots,k_i)$. So in fact, $A^i$ will be of dimension $i$. %{\color{red} a name for $A_i$?}
%and $\R$ the dimension reduction we will make on the difference $A$.

 For each $i\geq 1$, we define the reduction from $A^{i}$ to $A^{i-1}$ iteratively. Suppose we have obtained a sequence of nonzero hypermatrices $A^d=A, A^{d-1}, \ldots, A^i$, and a sequence of non-negative integers $\lambda_d\leq \lambda_{d-1}\leq \ldots\leq \lambda_{i+1}$.  Let $\lambda_i$ be the least integer such that %\textcolor[rgb]{1.00,0.00,0.00}{\{the least number in $\{k_{d-i},\ldots, k_d, n+1-k_{d-i},\ldots, n+1-k_d\}$ such that  $A^{i+1}_{k_{d-i}\cdots k_d}\neq 0$, that is, the least distance to the boundary in some dimension such that entries within this area are all zero.\}}
\begin{center}
some entry $A^i_{k_1,\ldots, k_i}\neq 0$ with $\min\{k_1,\ldots, k_i\}= \lambda_i$ or $\max\{k_1,\ldots, k_i\}= n-1-\lambda_i$.
\end{center}
Clearly $\lambda_i\le \lfloor\frac{n-1}{2}\rfloor$.
After a permutation on the set of dimensions and possibly an element reversion along some dimension, we have %$\lambda_{i}=k_i$, and
\begin{center}
any entry $A^i_{k_1,\ldots, k_i}= 0$ with $k_i< \lambda_{i}$; and some entry $A^i_{k_1,\ldots, k_i}\neq 0$ with $k_i= \lambda_{i}$.
\end{center}
Then let $A^{i-1}$ be a sub-hypermatrix of $A^i$ by restricting $k_i= \lambda_i$, that is a slice of $A^i$. Clearly $A^{i-1}\neq O$. Let $f_i(z_1,\ldots, z_i)$ be the generating function of $A^i$, then it can be written as
\begin{equation}\label{f-gener}
  f_i(z_1,\ldots, z_i)= z_i^{\lambda_i}(f_{i-1}(z_1,\ldots, z_{i-1})+ z_i\cdot *),
\end{equation}
where $f_{i-1}$ is the generating function of  $A^{i-1}$ and the symbol * denotes an uncertain polynomial.

After $d$ steps, we succeed to make a dimension reduction $\R: A\to A^{d-1}\to \cdots \to A^1\to A^0= \pm1$, with a sequence of integers $0\le \lambda_d\le \lambda_{d-1}\le \cdots \le \lambda_1\le \lfloor\frac{n-1}{2}\rfloor$, such that each $A^{i-1}$ is a slice of $A^i$ whose small side contains only zero entries. %separating the thinnest all-zero part of $A^i$
%with a collection of parameters $\lambda_1,\lambda_2,\ldots, \lambda_d$, such that the following hold:
%\begin{enumerate}[(1)]
%  \item $0\le \lambda_1\le \lambda_2\le \cdots \le \lambda_d\le \lfloor\frac{n-1}{2}\rfloor$;
%  \item for $1\le k\le d-1$, $A^k$ is the $k$-dimensional subhypermatrix (subsequence for $k=1$) of $A^{k+1}$ ($A$ for $k=d-1$) in the $(\lambda_{d-k}+1)$-th position along the deleted dimension;
%  \item for $1\le k\le d-1$, let $f_k$ be the generating function of $A^k$, then
%  \begin{align*}
%      f(z_1,\ldots, z_d)&=z_1^{\lambda_1}(f_{d-1}(z_2,\ldots, z_d)+ z_1\cdot *),\\
%      &\cdots\\
%      f_{k+1}(z_{d-k},\ldots, z_d)&=z_{d-k}^{\lambda_{d-k}}(f_k(z_{d-k+1},\ldots, z_d)+ z_{d-k}\cdot *),\\
%      &\cdots\\
%      f_1(z_d)&=z_d^{\lambda_d}(\pm 1+ z_d\cdot *),
%  \end{align*}
%  where * denotes uncertain polynomials.
%\end{enumerate}
If $A=X-Y$, then we say the two hypermatrices $X$ and $Y$ are \emph{$(\lambda_1,\lambda_2,\ldots, \lambda_d)$-identical} with respect to the dimension reduction $\R: A=X-Y\to A^{d-1}\to \cdots \to A^1\to \pm1$. Note that in the case that $\lambda_{r+1}< l\leq \lambda_{r}$ for some $r\geq 1$ and some proper threshold $l$, it roughly means that on some $r$ dimensions $X$ looks like $Y$, but on the other dimensions, $X$ does not look like $Y$.

\vspace{0.3cm}

Before giving a construction of the required $W$ in Lemma~\ref{lem-reduceto-Wgen-function}, we introduce the conception of periodic hypermatrices.
For any positive integers  $s< l$, we say a hypermatrix $W\in \{0,1\}^{l^{\times d}}$ is \emph{$s$-periodic}, if $W$ has a nonzero period $t\in [-s, s]^d$, that is, for any  $k\in [l]^d$ with $k+t\in [l]^d$, we have $W_k=W_{k+t}$. Otherwise, we say $W$ is \emph{$s$-aperiodic}. %{\color{red} change $k$ and $p$, say to $s$ and $t$}
% a hypermatrix $W\in \{0,1\}^{l^{\times d}}$ is \emph{$k$-aperiodic}, if $W$ has no nonzero period $p\in [-k, k]^d$.
%Let $h(z_1,\ldots, z_d)= \sum c_{k_1\cdots k_d}z_1^{k_1}\cdots z_d^{k_d}$ be a polynomial with each coefficient $c_{k_1\cdots k_d}\in \{0,\pm 1\}$. We say $h$ is \emph{$s$-sparse}, if there is some $j\in [d]$ such that $|k_j-k'_j|\ge s$ whenever $c_{k_1\cdots k_d}=c_{k'_1\cdots k'_d}\neq 0$. %{\color{red} change $i$ to $k$, and $k$ to $s$}

%for any two distinct index set $(i_1,\ldots, i_d),(i'_1,\ldots, i'_d)$ with $c_{i_1\cdots i_d}=c_{i'_1\cdots i'_d}$, we can find some $j\in [d]$ such that $|i_j-i'_j|\ge k$.

%Now, we will give a construction of a good $W$-generating function.

\begin{lemma}\label{lem-construction-Wgen-function}
%For any distinct hypermatrices $X,Y\in \{0,1\}^{n^{\times d}}$ and an odd number $l\le \lfloor\frac{n-1}{2}\rfloor$,
Suppose that distinct hypermatrices $X,Y\in \{0,1\}^{n^{\times d}}$ are $(\lambda_1,\lambda_2,\ldots, \lambda_d)$-identical with respect to a dimension reduction $\R: A^d=X-Y\to A^{d-1}\to \cdots \to A^1\to \pm1$. If $\lambda_r\ge l$ for some $r\ge 1$ and an odd number $l\le \lfloor\frac{n-1}{2}\rfloor$, then there exists a hypermatrix $W\in \{0,1\}^{l^{\times r}}$, such that the followings hold.
%such that the $W$-generating function of $X-Y$, denoted by $g(z_1,\ldots, z_d)$, has the following properties:
\begin{enumerate}[(a)]
  \item For each  $r\le i\le d$, let $g_i$ be the $W$-generating function of $A^i$. Then
 % \begin{align*}
%      g(z_1,\ldots, z_d)&=z_1^{\lambda_1}(g_{d-1}(z_2,\ldots, z_d)+ z_1\cdot *),\\
%      &\cdots\\
\begin{equation}\label{g-gener}
    g_{i+1}(z_1,\ldots, z_{i+1})=z_{i+1}^{\lambda_{i+1}}(g_{i}(z_1,\ldots, z_i)+ z_{i+1}\cdot *), ~r\le i\le d-1,
\end{equation}
   %   &\cdots\\
%      g_{r+1}(z_{d-r},\ldots, z_d)&=z_{d-r}^{\lambda_{d-r}}(g_r(z_{d-r+1},\ldots, z_d)+ z_{d-r}\cdot *),
 % \end{align*}
  where $*$ denotes uncertain polynomials;
  \item Let $h$ be the $W$-contiguous generating function of $A^r$. Then $h$ is nonzero and $\lfloor\frac{l-1}{4}\rfloor$-sparse.
\end{enumerate}
\end{lemma}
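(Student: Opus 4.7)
The plan is to prove (a) directly from the recursive structure of the reduction, and to construct $W$ for (b) using a block of $X^r$ anchored at the corner of $A^r$.

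For (a), I would exploit the defining property of the reduction: $A^{i+1}$ vanishes on $\{k_{i+1} < \lambda_{i+1}\}$ and restricts to $A^i$ on $\{k_{i+1} = \lambda_{i+1}\}$. Each coefficient of the $W$-generating function $g_{i+1}$ has the form $1_{X^{i+1}_k = W} - 1_{Y^{i+1}_k = W}$; whenever $k_{i+1} < \lambda_{i+1}$ the entire sub-block lies in the slab where $X^{i+1} = Y^{i+1}$, forcing the coefficient to vanish. The coefficient of $z_{i+1}^{\lambda_{i+1}}$ then collects exactly the $W$-matches on the slice $A^i$, which by definition equals $g_i(z_1,\ldots,z_i)$. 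This yields Eq. (\ref{g-gener}) immediately.

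For (b), I anchor at a corner of $A^r$: since $A^{r-1}\neq 0$, there exists $(a_1,\ldots,a_{r-1})$ with $A^r_{(a_1,\ldots,a_{r-1},\lambda_r)}\neq 0$. Using $\lambda_r \ge l$, I place an $l^{\times r}$-window $k^*+[l]^r$ with $k^*_r = \lambda_r - l + 1$ and $(a_1,\ldots,a_{r-1}) \in k^*+[l]^{r-1}$. I would then set $W$ to equal zero on its top $\lceil 3(l-1)/4 \rceil$ slices in coordinate $r$ and to copy $X^r$ on its bottom $\lfloor(l-1)/4\rfloor+1$ slices. Because the top slices of $W$ correspond to rows of $X^r$ lying in the zero slab of $A^r$ (where $X^r=Y^r$), the construction is compatible with $W = X^r_{k^*+[l]^r}$ provided one further adjusts $(k^*_1,\ldots,k^*_{r-1})$ to land on a position where those top rows of $X^r$ happen to vanish; this flexibility typically exists, and in degenerate cases can be recovered by permuting coordinates in the reduction. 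At $k=k^*$, $X^r_{k^*+[l]^r}=W$ while $Y^r_{k^*+[l]^r}\neq W$ (they disagree at the corner entry inherited from $A^{r-1}$), so the coefficient at $k^*$ is $\pm 1$ and $h\neq 0$.

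For the $\lfloor(l-1)/4\rfloor$-sparsity along coordinate $j=r$: any nonzero coefficient at $k$ requires $k_r\ge \lambda_r - l+1$ (otherwise the block lies entirely in the zero slab and the $X^r$, $Y^r$ contributions cancel), and the all-zero top of $W$ forces the upper $\lceil 3(l-1)/4 \rceil$ rows of the matching block in $X^r$ (or $Y^r$) to vanish. If two nonzero coefficients appeared at $k,k'$ with $0<k'_r-k_r<\lfloor(l-1)/4\rfloor$, the union of their zero windows would cover row $\lambda_r$ and a band above it, ruling out the nonzero bottom pattern of $W$ that should appear at row $k'_r+l-1$. The main obstacle will be juggling the three demands on $W$---(i) that it be realizable as a block of $X^r$ so that the $k^*$-coefficient is nonzero, (ii) that its top $3/4$-fraction of slices in coordinate $r$ be all zeros, and (iii) that its bottom quarter carry a distinctive marker that precludes near-by repeats---and verifying that the numerical constants match the precise separation $\lfloor(l-1)/4\rfloor$ rather than a smaller quantity.
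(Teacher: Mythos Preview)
Your treatment of (a) is correct and matches the paper: the recursion follows immediately from the definition of $\lambda_{i+1}$, and indeed holds for \emph{any} $W$.

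Your approach to (b), however, has a genuine gap. You want $W$ to have all zeros on its top $\lceil 3(l-1)/4\rceil$ slices in coordinate $r$, and you acknowledge that this forces $W=X^r_{k^*+[l]^r}$ only if ``those top rows of $X^r$ happen to vanish''. But the zero slab of $A^r$ only tells you that $X^r=Y^r$ there, not that $X^r=0$. There is no mechanism to make $X^r$ (or $Y^r$) vanish on any prescribed region; for instance, if $X^r$ is the all-ones hypermatrix and $Y^r$ differs from it in a single entry, your $W$ never occurs as a block of either, and $h$ is identically zero. Permuting coordinates in the reduction cannot repair this, since the same obstruction arises in every direction. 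Consequently both the nonvanishing of $h$ and your sparsity argument (which relies on the forced-zero top slices to preclude nearby repeats) collapse.

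The paper's construction avoids this by never imposing an artificial pattern on $W$. It takes a hyperplane $\alpha$ tangent to $H=\{j:A^r_j\neq 0\}$ at a single point $j$, centers an $l^{\times r}$ window $\jmath=j+[-\tfrac{l-1}{2},\tfrac{l-1}{2}]^r$ there (this is where $\lambda_r\ge l$ is used), and sets $W_1=X^r_\jmath$, $W_2=Y^r_\jmath$. Since $W_1\neq W_2$, each choice makes $h$ nonzero at $j$. Sparsity then comes not from a zero-pattern but from \emph{aperiodicity}: if two same-sign coefficients were within $s=\lfloor(l-1)/4\rfloor$ in every coordinate, the overlap would exhibit a period of $W$ in $[-s,s]^r$. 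The paper shows at least one of $W_1,W_2$ is $s$-aperiodic by a short argument: if both had periods $p_1,p_2\in[-s,s]^r$ pointing to the $H$-free side of $\alpha$, then chaining $X^r_j=X^r_{j+p_1}=Y^r_{j+p_1}=Y^r_{j+p_1+p_2}=X^r_{j+p_1+p_2}=X^r_{j+p_2}=Y^r_{j+p_2}=Y^r_j$ contradicts $A^r_j\neq 0$. This periodicity/tangency idea is the missing ingredient in your proposal.
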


\begin{proof}
 Property (a) trivially holds for any $W$ by the definitions of $\lambda_i$.

For Property (b), let $A^r= X^r-Y^r$, where $X^r$ and $Y^r$ are the corresponding $r$-dimensional sub-hypermatrices of $X$ and $Y$, respectively. We will find the hypermatrix $W$ in $X^r$ or $Y^r$. Let $H=\{j\in [n]^r:A^r_j\neq 0\}$ be the collection of positions with nonzero entries in $A^r$. View $H$ as a point set in the Euclidean space $E^r$. Then  there is an $(r-1)$-dimensional hyperplane $\alpha$ such that $\alpha\cap H=j$ for some $j\in H$ and the whole set $H$ lies on exactly one side of $\alpha$. Geometrically, the hyperplane $\alpha$ is tangent to $H$. Since $\lambda_r\ge l$, we have $j+[-\frac{l-1}{2},\frac{l-1}{2}]^r:=\jmath\subset [n]^r$.  Let $W_1=X^r_{\jmath}$ and $W_2=Y^r_{\jmath}$. Then $W_1\neq W_2$ since $X^r_j\neq Y^r_j$. So if we take $W\in \{W_1,W_2\}$, then the $W$-contiguous generating function
\[h(z_1,\ldots, z_r)= \sum_{k\in [n]^r}[1_{X^r_{k+[l]^r}=W}-1_{Y^r_{k+[l]^r}=W}]z_1^{k_1}z_2^{k_2}\cdots z_r^{k_r},\]
is nonzero.

It is left to show the sparsity of $h$. Denote $s=\lfloor\frac{l-1}{4}\rfloor$. First, we claim that if $W$ is $s$-aperiodic, then $h$ is $s$-sparse. Otherwise, suppose that there exist distinct
$k',k''\in [n]^r$ with $|k'_i-k''_i|< s$ for all $1\le i\le r$, such that, without loss of generality, $X^r_{k'+[l]^r}= X^r_{k''+[l]^r}= W$. Then for any $k\in [l]^r$ with $k+(k''-k')\in [l]^r$, $W_k=X^r_{k''+k}=X^r_{k'+k+(k''-k')}=W_{k+(k''-k')}$.  This implies that $W$ has a period $(k''-k')\in [-s, s]^r$, contradicting to the aperiodicity of $W$.
%$k',k''\in [n]^r$ with $|k_i-k'_i|< s$ for all $1\le i\le r$, such that, without loss of generality, $X^r_{k+[l]^r}= X^r_{k'+[l]^r}= W$. Then for any $i\in [l]^r$ with $i+(k'-k)\in [l]^r$, $W_i=X^r_{k'+i}=X^r_{k+i+(k'-k)}=W_{i+(k'-k)}$.  This implies that $W$ has a period $k'-k\in [-s, s]^r$, contradicting to the aperiodicity of $W$.

By the claim, we need to show that one of $W_1$ and $W_2$ is $s$-aperiodic. Suppose that both $W_1$ and $W_2$ are $s$-periodic, and let $p_1,p_2\in [-s, s]^r$ be their periods, respectively. Then $j+p_1+p_2\in \jmath$. Choose suitable $p_1$ from $\{\pm p_1\}$, $p_2$ from $\{\pm p_2\}$, such that $p_1+p_2\neq 0$ and they point to the same side of $\alpha$ against $H$. On this side, corresponding entries in $X^r$ and $Y^r$ are equal except  $X^r_j\neq Y^r_j$. However, combining the periodicity of $W_1$ and $W_2$, we have
\[X^r_j=X^r_{j+p_1}=Y^r_{j+p_1}=Y^r_{j+p_1+p_2}=X^r_{j+p_1+p_2}=X^r_{j+p_2}=Y^r_{j+p_2}=Y^r_j,\]
a contradiction. So we complete the proof.
\end{proof}

\vspace{0.3cm}

In what follows, under the notation of a dimension reduction $\R: A\to A^{d-1}\to \cdots \to A^1\to \pm1$, we always denote $f_{i}$  the generating function of  $A^{i}$,   $g_{i}$  the $W$-generating function of  $A^{i}$ for some related $W$, and  $*$ an uncertain polynomial.

\section{Trace reconstruction of matrices and cubes}

In this section, we will give upper bounds for trace reconstruction of matrices and cubes, proving Theorem~\ref{theorem-1} (1)-(2). For matrices,
 we only need to show that for any matrix pair $(X,Y)$, we can find the existence of a matrix $W$ so that the  $W$-generating function of $X-Y$ satisfies the conditions in Lemma~\ref{lem-reduceto-Wgen-function} with $m=n^{3/7}\log^{10/3}n$.

\begin{lemma}\label{lem-function-lowerbound-matrix}
For any distinct matrices $X,Y\in \{0,1\}^{n\times n}$, there exists some $W\in \{0,1\}^{l^{\times r}}$ with $l=4n^{1/7}+1$ and $ 0\le r\le 1$,
such that the $W$-generating function $g(z_1, z_2)$ of $X-Y$ satisfies
 %with to be with $w_i=\frac{z_i-q}{p}, i=1,2$, such thatO(n^{3/7}\log^{7/3}n)
\[|g(z_1,z_2)|\ge \exp(-C n^{3/7}\log^{10/3}n) \text{ for some $w=(w_1,w_2)\in \bbC(2,l^{\times r})$ with } \ell(w)\le \exp({C}{n^{-4/7}}),\]
where $C> 0$ is a constant. %$z_i=pw_i+q$, $i=1,2$, and{\color{red} (you use the same constant $C$ here, but different in Lemma 2.3, check whether can use a same $C$.)}
\end{lemma}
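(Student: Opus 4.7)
The plan is to classify the pair $(X, Y)$ by running the dimension reduction of Section~3 on $A = X - Y$, splitting based on whether $\lambda_1$ meets the threshold $l = 4n^{1/7}+1$, and constructing the required $W$ in each case. The reduction yields integers $\lambda_2 \le \lambda_1$, a one-dimensional slice $A^1$, and the nested generating-function identities $f(z_1, z_2) = z_2^{\lambda_2}(f_1(z_1) + z_2\cdot *)$ with $f_1(z_1) = z_1^{\lambda_1}(\pm 1 + z_1 \cdot *)$.

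In the main case $\lambda_1 \ge l$, I would take $r = 1$ and invoke Lemma~\ref{lem-construction-Wgen-function} to obtain $W \in \{0,1\}^l$ such that (a) the $W$-generating function obeys $g(z_1, z_2) = z_2^{\lambda_2}(g_1(z_1) + z_2 \cdot *)$, and (b) the $W$-contiguous generating function $h(z_1)$ of $A^1$ is nonzero and $\lfloor (l-1)/4 \rfloor$-sparse, i.e.\ $n^{1/7}$-sparse. Dividing out the lowest-degree nonzero monomial of $h$ places it in the form needed for Chase's Theorem~\ref{thm-sparsepolynomial-lowerbound} with $\mu = 1/7$, producing a point $z_1^*$ with $|\arg z_1^*| \le n^{-2/7}$ satisfying $|h(z_1^*)| \ge \exp(-Cn^{1/7}\log^5 n)$. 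Lifting via Lemma~\ref{lem-sequence-contiguous-function} then yields real parameters $z_{1,1}, \ldots, z_{1,l-1} \in [1 - 2p, 1]$ (whose corresponding $w$-coordinates satisfy $\ell(w_{1,i}) = 1$) at which $|g_1| \ge \exp(-Cn^{1/7}\log^5 n)$. Finally, viewing $g(z_1, z_2)/z_2^{\lambda_2}$ as a polynomial in $z_2$ with constant term $g_1(z_1)$ and other coefficients bounded in modulus by $\exp(O(n^{1/7}\log n))$, I would apply a flexible-$L_2$ product-of-rotations argument as in the proof of Lemma~\ref{lem-polynomial-lowerbound}. This produces a lower bound of the shape $\exp(-CL_2 \cdot n^{1/7}\log^5 n - Cn/L_2^2)$, whose two terms balance at $L_2 \sim n^{2/7}/\log^{5/3} n$ and combine to the target $\exp(-Cn^{3/7}\log^{10/3} n)$, while $\ell(w_2) \le \exp(CL_2^{-2})$ remains within the allowed range.

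In the remaining case $\lambda_1 < l$, both $\lambda_i$ are at most $l = O(n^{1/7})$, Lemma~\ref{lem-construction-Wgen-function} is not applicable with $r = 1$, and I would take $r = 0$ and $W = 1$, so that the $W$-generating function is the ordinary generating polynomial $f$. The approach is to apply the flexible-$L$ product argument of Lemma~\ref{lem-polynomial-lowerbound} twice, first to $f_1$ on $\rho_1\gamma(L_1)$ with $L_1 \sim n^{2/7}$, and then, at the obtained $z_1^*$, to $f(z_1^*, z_2)/z_2^{\lambda_2}$ on $\rho_2\gamma(L_2)$ with $L_2 \sim n^{2/7}$. The smallness of both $\lambda_i \le l$ makes the $\rho^{\lambda_i}$-costs $l/L_i^2 = O(n^{-3/7})$ negligible, so the dominant terms are the Borwein--Erd\'elyi contributions $CL_1$ and $CL_2 \cdot |\log |f_1(z_1^*)||$.

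The main obstacle is this second case, where a naive iteration of Lemma~\ref{lem-polynomial-lowerbound} gives $|f_1(z_1^*)| \ge \exp(-Cn^{2/7})$ and then a Step-2 cost of order $L_2 \cdot n^{2/7} = n^{4/7}$, off by a factor of $n^{1/7}$ from the target. Closing this gap requires either an alternative construction of $W$ carrying sparsity even when $\lambda_1 < l$ (so that the Chase-type lift of the first case can be reused), or a genuinely two-dimensional refinement of the Borwein--Erd\'elyi estimate tailored to polynomials whose lowest-order monomial has degree $O(n^{1/7})$. In either approach the choice $l = 4n^{1/7}+1$ is dictated by matching the sparsity parameter $n^{1/7}$ in Chase's theorem with the arc parameter $L \sim n^{2/7}$ needed to keep $\ell(w) \le \exp(Cn^{-4/7})$, yielding the balanced exponent $3/7$ in both cases.
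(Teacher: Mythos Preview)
Your handling of Case~2 ($\lambda_1 \ge l$) is essentially the paper's argument: Lemma~\ref{lem-construction-Wgen-function} supplies an $n^{1/7}$-sparse $h$, Theorem~\ref{thm-sparsepolynomial-lowerbound} and Lemma~\ref{lem-sequence-contiguous-function} lift this to $|g_1| \ge e^{-O(n^{1/7}\log^5 n)}$, and one application of Lemma~\ref{lem-polynomial-lowerbound} (with $m=n$ and $a=\tfrac{1}{7}+5\tfrac{\log\log n}{\log n}$) finishes. Your product-of-rotations balancing is exactly what that lemma does internally; note that because the lemma works on the shrunken circle $\rho\gamma(L)$, it already delivers $|w_2|\le 1$, so no separate $\ell(w_2)$ bookkeeping is needed.

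The genuine gap is your Case~1, which you flag as the obstacle---but in the paper it is the \emph{easy} case. You impose $L_1,L_2\sim n^{2/7}$ because you believe this is ``needed to keep $\ell(w)\le\exp(Cn^{-4/7})$''. This is the error: the whole point of evaluating on $\rho\gamma(L)$ with $\rho=1-\tfrac{7}{pL^2}$ is that Eq.~\eqref{eq-estimation-w-2} forces $|w|\le 1$ for \emph{every} $L\ge 4/p$, so $L$ is completely decoupled from the $\ell(w)$ constraint. You yourself observe that the $\rho^{\lambda_i}$ cost is only $O(l/L_i^2)$; there is therefore no reason to take $L_i$ large. The paper simply applies the ``in particular'' clause of Lemma~\ref{lem-polynomial-lowerbound} to $f_1$ with $m=l$ (so internally $L\asymp l^{1/3}$), obtaining $|f_1(z_1)|\ge e^{-O(l^{1/3})}$ with $|w_1|\le 1$, and then applies the lemma once more to $f(z_1,\cdot)$ with $m=l$ and $a=1/3$ (noting $\sum_i|c_i|\le n^2\le e^{O(l^{1/3})}$) to get $|f(z_1,z_2)|\ge e^{-O(l^{5/9})}=e^{-O(n^{5/63})}$ with $|w_2|\le 1$. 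This is far stronger than the target bound, and no two-dimensional Borwein--Erd\'elyi refinement or alternative $W$ is required.
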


\begin{proof}
Suppose that $X$ and $Y$ are $(\lambda_1,\lambda_2)$-identical with respect to a dimension reduction $\R: A=X-Y\to A^1\to \pm1$. %Let $l= 4n^{1/7}+1$ be a threshold.

\textbf{Case 1}. $\lambda_1< l$.

Set $r=0$ and $W=1$. Let $f(z_1,z_2)$ be the generating function of $A$. It is sufficient to show that
\[|f(z_1,z_2)|\ge e^{-O(l^{5/9})} \text{ for some $w_1, w_2\in \bbC$ with } |w_1|,|w_2|\le 1.\]
By Eq. (\ref{f-gener}),
\[f(z_1,z_2)=z_2^{\lambda_2}(f_1(z_1)+z_2\cdot *), ~~f_1(z_1)=z_1^{\lambda_1}(\pm1+z_1\cdot *).\]
%where $f_1$ is the generating function of the sequence $A^1$ and $*$ denotes uncertain polynomials.
Applying Lemma~\ref{lem-polynomial-lowerbound} for $m=l$ and $|c_0|=1,|c_i|\le 1$, we have
\begin{center}
$|f_1(z_1)|\ge e^{-O(l^{1/3})}$ for some $w_1\in \bbC$ with $|w_1|\le 1$.
\end{center}
Fix $w_1$ and $z_1$,  and let $f'(z_2):=f(z_1,z_2)$. We would like to apply Lemma~\ref{lem-polynomial-lowerbound} to $f'(z_2)$ again. Since $m_0=\lambda_2\leq \lambda_1<l$, $|c_0|=|f_1(z_1)|\ge e^{-O(l^{1/3})}$, and $\sum_i|c_i|$ is at most the number of nonzero terms in $A$, which is upper bounded by $ n^2 \le e^{O(l^{1/3})}$, then by Lemma~\ref{lem-polynomial-lowerbound} with $m=l$ and $a={1/3}$, we have
\begin{center}
$|f(z_1,z_2)|=|f'(z_2)|\ge e^{-O(l^{5/9})}$ for some $w_2\in \bbC$ with $|w_2|\le 1$.
\end{center}

\textbf{Case 2}. $\lambda_1\ge l$. %{\color{blue}(see details below)}

Set $r=1$ and take the sequence $W\in \{0,1\}^l$ in Lemma~\ref{lem-construction-Wgen-function}.  Let $g(z_1,z_2)$ be the $W$-generating function of $A$, where $(z_1,z_2)\in \bbC(2,l^{\times 1})$.
By Eq. (\ref{g-gener}),%Lemma~\ref{lem-construction-Wgen-function} (a), we write
\[g(z_1,z_2)=z_2^{\lambda_2}(g_1(z_1)+z_2\cdot *),\]
where $g_1$ is the $W$-generating function of the sequence $A^1$. % and $*$ denotes an uncertain polynomial.
By Lemma~\ref{lem-construction-Wgen-function} (b), the $W$-contiguous generating function of $A^1$, say $h(z)$, is nonzero and $n^{1/7}$-sparse. Let $d_0$ be the minimum degree of $h$. Then we can write
\[h(z)= \pm z^{d_0}(1- \epsilon z^{d_1}+ \sum_{j=n^{1/7}}^n c_jz^j):= \pm z^{d_0}h_0(z)\]
with $1\le d_1< n^{1/7}, \epsilon\in \{0,1\}$ and $|c_j|\le 1$ for each $j$. Clearly, $h_0(z)$ satisfies the condition in Theorem~\ref{thm-sparsepolynomial-lowerbound} for $\mu=1/7$. Hence,
$|h_0(e^{i\theta})|\ge \exp(-O(n^{1/7} \log^5n))$ for some $|\theta|\le n^{-2/7}$. Let  $z_0=e^{i\theta}\in \gamma(\pi n^{2/7})\subset \gamma(n^{2/7})$. Then $|h(z_0)|= |h_0(z_0)|\ge \exp(-O(n^{1/7} \log^5n))$.
% $|h_0(z)|\ge \exp(-O(n^{1/7} \log^5n))$ for some $z=e^{i\theta}$ with $|\theta|\le n^{-2/7}$ and thus $z\in \gamma(\pi n^{2/7})\subset \gamma(n^{2/7})$. Then $|h(z)|= |h_0(z)|\ge \exp(-O(n^{1/7} \log^5n))$ for some $z\in \gamma(n^{2/7})$.
Applying Lemma~\ref{lem-sequence-contiguous-function}, we get
\[|g_1(z_1)|\ge e^{-O(n^{1/7}\log^5 n)}\]
for some  $z_1= (z_{10},\ldots, z_{1,l-1})$ with $z_{10}=z_0$ and $z_{1j}\in [1-2p,1]$, $j\geq 1$. By Eq. (\ref{eq-estimation-w}), the corresponding $w_1\in \bbC^l$ satisfies $\ell(w_1)\le e^{O(n^{-4/7})}$.
%\begin{center}
%$|g_1(z_1)|\ge e^{-O(n^{1/7}\log^5 n)}$
%for some $w_1$ with  $\ell(w_1)\le e^{O(n^{-4/7})}$,
%\end{center}
%Set $z_1= (z_{10},\ldots, z_{1,l-1})$ and $z=z_{10}$, then we apply Lemma~\ref{lem-sequence-contiguous-function} and  Eq. (\ref{eq-estimation-w}) to get%(\ref{eq-estimation-w})(complete these two ?? )

Fixing $w_1$ and $z_1$,  let $g'(z_2)=g(z_1,z_2)$ which is single-variable. We apply Lemma~\ref{lem-polynomial-lowerbound} to $g'(z_2)$. Since $m_0=\lambda_2< n$, $|c_0|=|g_1(z_1)|\ge e^{-O(n^{1/7}\log^5 n)}$, and $\sum_i|c_i|$ is at most the number of nonzero terms in the Littlewood polynomial $g(z_1,z_2)$, which is upper bounded by $n\binom{n}{l} \le e^{O(l\log n)}\le e^{O(n^{1/7}\log^5 n)}$, then by Lemma~\ref{lem-polynomial-lowerbound} with $m=n$ and $a=\frac{1}{7}+ 5\frac{\log \log n}{\log n}$, we have
\begin{center}
$|g(z_1,z_2)|= |g'(z_2)|\ge e^{-O(n^{3/7}\log^{10/3}n)}$ for some $w_2\in \bbC$ with $|w_2|\le 1$.
\end{center}
%Combining Case 1 and Case 2, we complete the proof.
\end{proof}
\vspace{5pt}

%\begin{proof}[Proof of Theorem~\ref{theorem-1}]
Combining Lemma~\ref{lem-reduceto-Wgen-function} and Lemma~\ref{lem-function-lowerbound-matrix}, Theorem~\ref{theorem-1} (1) immediately follows.
%\end{proof}

As for trace reconstruction of cubes, we can make the same classification as for matrices, resulting the following lemma, whose proof is moved to Appendix. Hence Theorem~\ref{theorem-1} (2) follows by applying Lemma~\ref{lem-reduceto-Wgen-function} with $m=n^{5/9}\log^{5/2}n$. % \textcolor{blue}{the proof of cubes is added.}
 %We move the proof to Appendix.

\begin{lemma}\label{lem-function-lowerbound-cube}
For any distinct cubes $X,Y\in \{0,1\}^{n\times n\times n}$, there exists some $W\in \{0,1\}^{l^{\times r}}$ with $l=4n^{1/9}+1$ and $ 0\le r\le 1$,
%such that the $W$-generating function $g(z_1, z_2)$ of $X-Y$ satisfiesO(n^{5/9}\log^{3/2}n)
 such that the $W$-generating function $g(z_1, z_2, z_3)$ of $X-Y$ satisfies
\[|g(z)|\ge \exp(-C n^{5/9}\log^{5/2}n) \text{ for some $w=(w_1, w_2, w_3)\in \bbC(3,l^{\times r})$ with } \ell(w)\le \exp(\frac{C\log^{5/2}n}{n^{4/9}}),\]
where $C> 0$ is a constant.% $z_i=pw_i+q, i=1,2,3$, and
\end{lemma}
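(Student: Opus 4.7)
My plan is to mirror the proof of Lemma 4.1 for matrices, adapted to one extra dimension. I would first apply the dimension reduction of Section~3 to $A = X-Y$ to obtain integers $\lambda_1 \ge \lambda_2 \ge \lambda_3$ and slices $A^2 \supset A^1$, then fix $l = 4n^{1/9}+1$, and split into two main cases according to how $\lambda_1$ compares with $l$, exactly as in Lemma 4.1.

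In Case 1 ($\lambda_1 < l$), take $r = 0$ and $W = 1$, so $g = f$ is the ordinary generating function of $A$. The nested factorization in Eq.~(3.1) lets me iterate Lemma 2.2 three times: first, using the ``$|c_0|=1$'' clause on $f_1 = z_1^{\lambda_1}(\pm 1 + z_1\cdot *)$ with $m = l$, obtain $|f_1(z_1)|\ge e^{-O(l^{1/3})}$ for some $|w_1|\le 1$; then, viewing $f_2(z_1,z_2)$ as a single-variable polynomial in $z_2$, apply Lemma 2.2 with $m = l$, $a = 1/3 + O(\log\log n/\log n)$ (enough to dominate both $|f_1(z_1)|^{-1}$ and $\sum |c_i|\le n^2$) to get $|f_2|\ge e^{-O(l^{5/9+o(1)})}$; finally, do the same step in $z_3$ with $a = 5/9 + O(\log\log n/\log n)$, yielding $|f|\ge e^{-O(l^{19/27+o(1)})} = e^{-O(n^{19/243}\,\mathrm{polylog}\,n)}$, which is comfortably below the target.

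In Case 2 ($\lambda_1 \ge l$), take $r = 1$ and invoke Lemma 3.2 to obtain $W\in\{0,1\}^l$ whose $W$-contiguous generating function $h$ of $A^1$ is nonzero and $n^{1/9}$-sparse. Applying Theorem 2.3 with $\mu = 1/9$ and then Lemma 2.1 yields $|g_1(z_1)|\ge e^{-O(n^{1/9}\log^5 n)}$ for some $z_1\in\bbC^l$ whose first entry lies in $\gamma(n^{2/9})$ and whose remaining entries lie in $[1-2p,1]$, so $\ell(w_1) \le e^{O(n^{-4/9}\log^5 n)}$ by Eq.~(2.1). I would then apply Lemma 2.2 twice in succession, viewing $g_2(z_1,z_2)$ and then $g(z_1,z_2,z_3)$ as single-variable polynomials in $z_2$ and $z_3$, choosing $(m,a)$ according to the magnitudes of $\lambda_2$ and $\lambda_3$ relative to $l$: a small $\lambda_i < l$ permits $m$ just slightly above the previous error threshold with $a = 1$, making that step essentially free, while a large $\lambda_i \ge l$ forces $m = n$ and incurs the $(1+2a)/3$ update.

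The main obstacle is the sub-case where both $\lambda_2$ and $\lambda_3$ are at least $l$: two successive $m = n$ applications of Lemma 2.2 cascade the exponent from $1/9$ to $11/27$ to $49/81$, which exceeds the target $5/9 = 45/81$. To close this gap I would exploit the additional structure guaranteed by Lemma 3.2(b): since $\lambda_2 \ge l$, upgrade to $r = 2$ with $W\in\{0,1\}^{l\times l}$, so that the bivariate $W$-contiguous generator $h(z_1,z_2)$ of $A^2$ is nonzero and two-dimensionally $n^{1/9}$-sparse. Combining Theorem 2.3 applied coordinate-wise with a bivariate adaptation of Lemma 2.1 (obtained by iterating the univariate version along each axis, treating the intermediate coefficient as a sub-polynomial) would supply the bound $|g_2(z_1,z_2)|\ge e^{-O(n^{1/3}\,\mathrm{polylog}\,n)}$. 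A final Lemma 2.2 application on $g$ in $z_3$ with $m = n$ and $a = 1/3 + O(\log\log n/\log n)$ then lifts this to $|g|\ge e^{-O(n^{5/9}\log^{5/2}n)}$, matching the claim, while the $\ell(w)\le e^{O(n^{-4/9}\log^{5/2}n)}$ bound follows by tracking $\ell(w_1)$ through the analysis and noting that the subsequent steps contribute $|w_2|, |w_3|\le 1$.
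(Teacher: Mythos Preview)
Your Case 1 is correct and matches the paper.

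In Case 2, your starting point---fixing $r=1$, obtaining $W\in\{0,1\}^l$ from Lemma~\ref{lem-construction-Wgen-function}, and bounding $|g_1(z_1)|\ge e^{-O(n^{1/9}\log^5 n)}$ with $\ell(w_1)\le e^{O(n^{-4/9})}$---is also correct. The divergence is in how you handle the remaining two variables $z_2,z_3$.

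Your plan to iterate Lemma~\ref{lem-polynomial-lowerbound} and, in the bad sub-case $\lambda_2\ge l$, upgrade to $r=2$ has two genuine problems. First, the lemma as stated requires $0\le r\le 1$, so choosing $r=2$ proves a different statement (even though Lemma~\ref{lem-reduceto-Wgen-function} would still accept it downstream). Second, and more seriously, the step ``Theorem~\ref{thm-sparsepolynomial-lowerbound} applied coordinate-wise with a bivariate adaptation of Lemma~\ref{lem-sequence-contiguous-function} gives $|g_2|\ge e^{-O(n^{1/3}\,\mathrm{polylog}\,n)}$'' is not justified. Theorem~\ref{thm-sparsepolynomial-lowerbound} needs a single-variable polynomial with coefficients of modulus at most $1$ and a specific leading structure; fixing one variable in $h(z_{10},z_{20})$ produces coefficients of size up to $n$, and the sparsity from Lemma~\ref{lem-construction-Wgen-function}(b) is only guaranteed along \emph{one} axis. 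The multivariate bound available in this paper (Theorem~\ref{thm-sparsepolynomial-multivariate-lowerbound}) with $\mu=1/9$ yields only $\max_{\gamma(L)^2}|h|\ge e^{-O(Ln^{8/9}\log n)}$, far too weak. Nothing here produces the $n^{1/3}$ exponent you claim, and you have not supplied an argument.

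The paper avoids this obstacle entirely and stays with $r=1$ throughout Case 2, without splitting on $\lambda_2,\lambda_3$. After fixing $z_1$, it handles $z_2,z_3$ \emph{simultaneously}: writing $g=z_3^{\lambda_3}(g_2+z_3 g'_2)$ and $g_2=z_2^{\lambda_2}(g_1+z_2 g'_1)$, it forms the double product
\[
F(z_2,z_3)=\prod_{1\le a,b\le L} g\bigl(z_1,\,z_2 e^{2\pi i a/L},\,z_3 e^{2\pi i b/L}\bigr),
\]
and applies the maximum modulus principle (in $z_3$, then in $z_2$) to get $|F(z_2,z_3)|\ge |g_1(z_1)|^{L^2}$ for some $z_2,z_3\in\gamma$. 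Since $|z_2|=|z_3|=1$, the prefactors $z_i^{\lambda_i}$ cost nothing regardless of the size of $\lambda_2,\lambda_3$. Rotating so that $z_2,z_3\in\gamma(L)$ and bounding the other $L^2-1$ factors by $|g|\le n^2\binom{n}{l}$ gives $|g|\ge e^{-O(n^{1/9}L^2\log^5 n)}$ with $|w_2|,|w_3|\le e^{O(L^{-2})}$. Taking $L=n^{2/9}/\log^{5/4}n$ balances these to the claimed $e^{-O(n^{5/9}\log^{5/2}n)}$ and $\ell(w)\le e^{O(n^{-4/9}\log^{5/2}n)}$.

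The key idea you are missing is this joint treatment of the last two dimensions: two sequential applications of Lemma~\ref{lem-polynomial-lowerbound} compound the $(1+2a)/3$ update, whereas a single bivariate product-and-rotation step costs only one $L^2$ factor against $|g_1|$.
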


%\begin{remark}
Following the proof ideas for matrices and cubes, one may get an upper bound $\exp(\widetilde{O}(n^{1-\frac{4}{2d+3}}))$ for hypermatrices of general dimension $d\ge 4$, which is slightly better than the bound $\exp(\widetilde{O}(n^{d/(d+2)}))$ of Krishnamurthy et al. \cite{krishnamurthy2021trace}.
 However, this is still close to the trivial bound $\exp(O(n))$ for high dimensions unfortunately. To settle  this problem, we establish a Littlewood-type result for multi-variable polynomials with certain sparse property, that  is Theorem~\ref{thm-sparsepolynomial-multivariate-lowerbound}, which generalizes the single-variable version in  Theorem~\ref{thm-sparsepolynomial-lowerbound}.  Such a result on lower bounding sparse polynomials at some restricted point is useful, since certain $W$-contiguous generating function is sparse  by Lemma~\ref{lem-construction-Wgen-function} (b).
 See details in the next section.

\section{Trace reconstruction of hypermatrices}
In this section, we always assume $d\ge 4$ is fixed. First, we recall Theorem~\ref{thm-sparsepolynomial-multivariate-lowerbound} and give a proof of it. %Theorem~\ref{thm-sparsepolynomial-multivariate-lowerbound}, which is  restated below.
In this proof, we apply a technical method to reduce the sparse multi-variable polynomial to one-variable. We realize this by finding a suitable hyperplane tangent to a sparse set of points with at most two tangent points in the Euclidean space, based on the ideas in \cite{zhang2024reconstruction}. This is different from the proof of  Theorem~\ref{thm-sparsepolynomial-lowerbound} for the one-variable case by using complex analysis.

% By Lemma~\ref{lem-construction-Wgen-function} (b), certain $W$-contiguous generating function is a sparse multi-variable polynomial. So it is necessary to lower bound sparse polynomials at some restricted point. To achieve this,
%we prove Theorem~\ref{theorem-3} (3) for $d\ge 4$, which provide an upper bound for trace reconstruction of hypermatrices independent of $d$.
%By Lemma~\ref{lem-reduceto-Wgen-function}, we only need to find lower bounds for the $W$-generating functions.
%
%Prior to that, we provide two useful results as follows.
% By Lemma~\ref{lem-construction-Wgen-function} (b), certain $W$-contiguous generating function is a sparse polynomial. So it suffices to lower bound sparse polynomials, which is given in the following theorem. This is a multivariate version of Theorem~\ref{thm-sparsepolynomial-lowerbound}.
%\textcolor[rgb]{1.00,0.00,0.00}{To simplify the multivariate case, we will apply a somewhat elegant method to reduce the sparse polynomial to one-variable. We realize this by finding a suitable hyperplane tangent to a sparse set of points with at most two tangent points in the Euclidean space, based on the ideas in \cite{zhang2024reconstruction}.
%}
\begin{thmbis}{thm-sparsepolynomial-multivariate-lowerbound}
Let $h(z_1,\ldots, z_d)= \sum_{k\in [n]^d} c_kz_1^{k_1}\cdots z_d^{k_d}$ be a nonzero $n^\mu$-sparse polynomial with $\mu\in [0,1)$ and each $c_k\in \{0,\pm 1\}$. Then for any $\Delta\ge 1$ and any $L$ with $1\le L\le n^\Delta$,
\[\max_{z_1,\ldots, z_d\in \gamma(L)}|h|\ge e^{-O(\Delta L n^{1-\mu} \log n)}.\]
\end{thmbis}
\begin{proof}%[Proof of Theorem~\ref{thm-sparsepolynomial-multivariate-lowerbound} and Corollary~\ref{cor-polynomial-multivariate-lowerbound}]
The idea of this proof is to reduce the multi-variable polynomial $h$ to a single-variable polynomial, for which we can find a big positive value at some restricted point by the similar technique as in the proof of Lemma~\ref{lem-polynomial-lowerbound}. The way of reduction is to set $z_i=u^{b_i}$ for some $b= (b_1,\ldots, b_d)\in \bbZ^d$, then $h$ becomes $\sum_k c_ku^{b\cdot k}$, a polynomial say $\hbar$ in $u$. By this way, the coefficient of a monomial  $u^m$ in  $\hbar$ is the sum of $c_k$ over all $k$'s satisfying $b\cdot k=m$, which are locating on a hyperplane. However, a randomly chosen $b$ may result in all coefficients in $\hbar$ being zero, thus $\hbar$ is a zero function, and useless. For this reason, we need the integer vector $b$ satisfying the following property,

(P): If $m_0$ is the smallest integer such that $b\cdot k=m_0$ for some $k\in H:= \{k\in [n]^d: c_k\neq 0\}$, then the number of such $k$'s in $H$ is at most two. This could be guaranteed by the sparsity of $h$.

Note that $m_0$ might be the possible minimum degree of nonzero monomials in $\hbar$. If there is only one $k\in H$ satisfying  $b\cdot k=m_0$, then the coefficient of $u^{m_0}$ is $c_k\neq 0$, thus $m_0$ is the  minimum degree and $\hbar$ is nonzero. However, if there are two such $k$'s, say $k^1$, $k^2$, then the coefficient is $c_{k^1}+c_{k^2}$ which might be zero if $c_{k^1}=-c_{k^2}$. In this case, we will introduce a unit root number $v$ and set $z_i=u^{b_i}v$ for some index $i$ to make sure the coefficient of  $u^{m_0}$ nonzero.

Further, since we aim to lower bound $h$ on some point with each $z_i$ close to $1$ on the unit circle, or equivalently, we aim to lower bound $\hbar$ on some point $u\in \gamma$ with  $u^{b_i}$ close to $1$, then each integer $b_i$ should be small, or the vector $b$ should be of a short length.

%
%\textcolor[rgb]{0.00,0.00,1.00}{Set a point set $H= \{k\in [n]^d: c_k\neq 0\}$ in the $d$-dimensional Euclidean space $E^d$. Our key idea is to find a linear transformation $\A$ to project $H$ into a one-dimensional space such that $\A(H)$ has an endpoint with at most two $k$'s of distinct $c_k$-values. This will reduce $h$ to a one-variable polynomial via a variable substitution. To be specific, we find some suitable $b= (b_1,\ldots, b_d)\in \bbZ^d$ and let $\A(x)=b\cdot x$ for $x\in E^d$, where $\cdot$ denotes inner product. We roughly say, taking $z_i=u^{b_i}$ turns $h$ into $\sum_k c_ku^{b\cdot k}$, a polynomial in $u$. However, each exponent $b_i$ in $z_i$ plays the same role of exponent in the upper bound of $w_i$ by Eq.~(\ref{eq-estimation-w}). To effectively upper bound $\{w_i\}$, we first need to find a suitable $b$ with a small length.}
%
%\textcolor[rgb]{0.00,0.00,1.00}{We will show that there exists some $b\in \bbZ^d$ such that $\sum_k c_ku^{b\cdot k}$ has at most two $k$'s of distinct $c_k$-values in the minimum degree and $|b_i|\le dn^{1-\mu}:=R$ for $1\le i\le d$.}satisfy property (P) and will have a short length.

Now we formally construct $b$, which will be chosen from the following set
\[\N(R)= \{(b_1,\ldots, b_d)\in \bbZ^d: \gcd(b_1,\ldots, b_d)=1, \sqrt{b_1^2+\cdots +b_d^2}\le R\},\] where $R=dn^{1-\mu}$.
%\[\N(R)= \{(a_1,\ldots, a_d)\in \bbZ^d: \gcd(a_1,\ldots, a_d)=1, \sqrt{a_1^2+\cdots +a_d^2}\le R\}.\]
View $H$ as a point set in the $d$-dimensional Euclidean space $E^d$.
Take a sphere shell $S$ of radius $\frac{\sqrt{d}}{2}n$ enclosing $H$, and a circumscribed hyperpolygon $P$ of $S$ that is formed by all $(d-1)$-dimensional hyperplanes with normal vectors in $\N(R)$. By a geometric argument, we obtain a claim below, whose  proof is moved to Appendix.
\vspace{-0.1cm}
\begin{claim}\label{claim-diameter-small}
For $n$ sufficiently large, each facet of $P$ has a diameter less than $n^\mu$.
\end{claim}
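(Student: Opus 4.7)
The plan is to identify each facet $F$ of $P$ with the radial projection of an angular Voronoi cell on $S^{d-1}$, and to bound its diameter via the angular covering radius of $\{b'/|b'| : b' \in \N(R)\}$.

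Fix a facet $F$ with outward normal $b \in \N(R)$; its supporting hyperplane $H_b = \{y : y \cdot b = (\sqrt d/2)\,n\,|b|\}$ is tangent to $S$ at $p_b := (\sqrt d/2)\,n\cdot b/|b|$, and Cauchy--Schwarz gives $p_b \in F$. For any $y \in F$ the radial direction $y/|y|$ satisfies $(y/|y|) \cdot (b/|b|) \ge (y/|y|) \cdot (b'/|b'|)$ for all $b' \in \N(R)$ (just divide the half-space inequalities by $|y|$), so $y/|y|$ lies in the angular Voronoi cell $V_b \subset S^{d-1}$ of the direction $b/|b|$; conversely every $u \in V_b$ radially projects onto $H_b$ to a point of $F$. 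Thus $F$ is the radial projection of $V_b$ onto $H_b$, and its diameter is at most $2 r_S \tan(\alpha_{\max})$, where $r_S := (\sqrt d/2)n$ and $\alpha_{\max}$ is the covering radius of $\{b'/|b'|\}$ on $S^{d-1}$: every $u \in V_b$ is closer to $b/|b|$ than to any other $b'/|b'|$, hence within angular distance $\alpha_{\max}$ of $b/|b|$.

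A standard rounding argument bounds $\alpha_{\max}$. For any unit $u \in S^{d-1}$, let $b^* := \mathrm{round}((R-\sqrt d)\,u) \in \bbZ^d$; then $|b^* - (R-\sqrt d)\,u| \le \sqrt d/2$ gives $|b^*| \le R$, and a short computation shows that the component of $b^*$ perpendicular to $u$ is exactly $\epsilon - (\epsilon\cdot u)u$ where $\epsilon := b^* - (R-\sqrt d)u$, whose norm is at most $\sqrt d/2$. Hence $\sin(\angle(b^*, u)) \le \sqrt d/(2|b^*|) \le \sqrt d/(2R) + o(1/R)$. Dividing $b^*$ by its $\gcd$ preserves the direction while keeping it in $\N(R)$, so $\alpha_{\max} \le \sqrt d/(2R) + o(1/R)$. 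Plugging in $R = dn^{1-\mu}$,
\[
\mathrm{diam}(F) \;\le\; 2 r_S \tan(\alpha_{\max}) \;\le\; \frac{dn}{2R} + o(n^\mu) \;=\; \frac{n^\mu}{2} + o(n^\mu),
\]
which is strictly less than $n^\mu$ for all sufficiently large $n$.

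I expect the main subtlety to be getting the right leading constant in the final bound, i.e.\ establishing $\alpha_{\max} \le (1+o(1))\sqrt d/(2R)$ with the factor of $2$ rather than a crude $\sqrt d/R$; it is this sharpening that yields the target $n^\mu/2$ headline constant, and it comes from the identity $\sin(\angle(b^*,u)) = |b^* - (b^*\cdot u)u|/|b^*|$ together with the rounding bound $|b^* - (b^*\cdot u)u| \le \sqrt d/2$, rather than the looser $|b^* - (R-\sqrt d)u|/|b^*|$. The only other piece of housekeeping is to verify that primitivization does not spoil the estimate, which is immediate since dividing by the $\gcd$ shrinks $|b^*|$ only, leaving both the direction and the $\sin$-bound intact.
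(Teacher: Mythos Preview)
Your argument is correct and close in spirit to the paper's, but cleaner in execution. Both proofs ultimately bound, for every facet, the distance from an arbitrary facet point to the tangent point with $S$. The paper does this by taking the farthest point $\vx_0$ of $P$ from the origin and constructing a primitive lattice direction $\va\in\N(R)$ near the direction of $\vx_0$: it first scales so that the largest coordinate becomes a prime $p$ and then rounds the remaining coordinates into $[0,p-1]$, thereby forcing $\gcd=1$; this yields $\tan\theta\le(1+o(1))\sqrt{d-1}/R$ and the diameter bound $(1+o(1))\sqrt{d(d-1)}\,n/R<dn/R=n^\mu$. Your Voronoi-cell description is more conceptual, and replacing the prime trick by simply dividing out the $\gcd$ is both simpler and sharper: combined with your observation that only the component of the rounding error orthogonal to $u$ enters $\sin\angle(b^*,u)$, you obtain $\alpha_{\max}\le(1+o(1))\sqrt d/(2R)$ and hence diameter $\le(1+o(1))\,n^\mu/2$, roughly a factor of two better than the paper. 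One small point worth making explicit (you use it implicitly when writing $\sqrt d/(2|b^*|)\le\sqrt d/(2R)+o(1/R)$): the rounding gives $|b^*|\ge R-\tfrac{3}{2}\sqrt d>0$ for large $n$, so $b^*\neq 0$ and the gcd division is well defined.
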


Translate $P$ until $P$ touches $H$ for the first time, that is, $P\cap H\neq\emptyset$ and $H$ is still inside $P$. Let $\beta$ be a facet of $P$ touching $H$, and $b=(b_1,\ldots,b_d)\in \N(R)$ be the normal vector of $\beta$. Then $H$ lies on one side of $\beta$. Choose $b\in \{\pm b\}$ such that $b$ points to the $H$-side with respect to $\beta$. Then $\beta$ belongs to the hyperplane $b\cdot x= m_0$, and $m_0=\min \{b\cdot k: k\in H\}$.

By the claim, $\beta$ has a diameter less than $n^\mu$.
Let $H_1= \{k\in H: c_k=1\}$ and $H_2= \{k\in H: c_k=-1\}$ form a bipartition of $H$. For each $i=1,2$, the Euclidean distance of any two distinct points  in $H_i$ is at least $n^\mu$ by the $n^\mu$-sparsity of $h$.
Hence $|\beta\cap H_1|\leq 1$, $|\beta\cap H_2|\leq 1$, and thus $|\beta\cap H|\leq 2$ satisfying property (P).

%Therefore, $b$ is the required parameter. Note that if $|\beta\cap H_1|=1$ and $|\beta\cap H_2|=1$, let $h_1=\beta\cap H_1$, $h_2=\beta\cap H_2$. Since $H_1\cap H_2= \emptyset$, $h_1$ and $h_2$ are distinct points. Without loss of generality, assume that the first coordinates of $h_1$ and $h_2$ have a nonzero difference $t$ with $1\leq t< n^\mu$.

Given $b$ above, we are able to take a variable substitution that $z_1=u^{b_1}v, z_2=u^{b_2},\ldots, z_d=u^{b_d}$ where $v=e^{\frac{\pi i}{2n^\Delta}}$ is a fixed number. % for $u\in \gamma$ and a fixed. %, where the variable $v$ helps deal with the situation that both $h_1$ and $h_2$ exist.
Now we write
\[h(z_1,\ldots, z_d)= \sum_{k\in [n]^d} c_kz_1^{k_1}\cdots z_d^{k_d}= \sum_{k\in [n]^d} c_k v^{k_1}u^{b\cdot k}=:\hbar(u).\]
Then $\hbar$ has possible minimum degree $m_0$, so we write
\[\hbar(u):= u^{m_0}(C_v+ u\cdot *)=: u^{m_0}\hbar_0(u).\]
Note that $C_v$ is the coefficient of $u^{m_0}$, which equals $\sum_{k\in \beta\cap H} c_k v^{k_1}$.  When $|\beta\cap H|= 1$, since $v\in \gamma$, we have $|C_v|=|\hbar_0(0)|= 1$. When $|\beta\cap H|= 2$, then $|\beta\cap H_1|=|\beta\cap H_2|=1$. Denote $k^i=\beta\cap H_i$, $i=1,2$. Without loss of generality, we assume the first coordinates $k^1_1\neq k^2_1$, and assume $k^1_1> k^2_1$. (For other cases, say $k^1_i\neq k^2_i$, we just move $v$ to $z_i$ in the variable substitution step). Since $\beta$ has a diameter less than $n^\mu$, we have $1\leq t:=k^1_1- k^2_1< n^\mu$. Then
\[C_v=c_{k^1}v^{k^1_1}+c_{k^2}v^{k^2_1}=\pm v^{k^2_1}(1-v^t).\]
So \[|\hbar_0(0)|= |C_v|=|1- v^t|= |1- e^{\frac{t\pi i}{2n^\Delta}}|\ge |1- e^{\frac{\pi i}{2n^\Delta}}|\ge e^{-O(\Delta \log n)},\] where the first inequality follows from $1\leq t< n^\mu\le n^\Delta$.

%where $\hbar'(v)$ is a Littlewood polynomial in the fixed $v$, with each nonzero term corresponding to one point in $\beta\cap H$. When $|\beta\cap H|= 1$, since $v=e^{\frac{\pi i}{2n^\mu}}\in \gamma$, we have $|p_0(0)|= |p'(v)|= 1$. On the other hand, when $|\beta\cap H_1|=1$ and $|\beta\cap H_2|=1$, Since $h_1$ and $h_2$ has a difference in the first coordinate and the exponent $k_1\in [n]$ of $v$ is exactly the first coordinate of the point $k\in H$, we can write $p'(v)= \pm v^{m_2}(1-v^t)$ with $m_2\in [n]$. Recall that $1\le t< n^\mu$, then
%\[|p_0(0)|= |p'(v)|= |1- v^t|= |1- e^{\frac{t\pi i}{2n^\mu}}|\ge |1- e^{\frac{\pi i}{2n^\mu}}|\ge e^{-O(\log n)}.\]
%%Since $|\beta\cap H_1|\leq 1$, $|\beta\cap H_2|\leq 1$ and that $H$ lies on one side of $\beta$, there exist integers $m_1,m_2$ with $0\le m_2\le n-1$ such that
%%\[p(u)= u^{m_1}(\pm v^{m_2}(1-\epsilon v^t)+u\cdot *),\]
%%where $\epsilon\in \{0,1\}$ and $*$ denotes an uncertain polynomial. Let $p(u)= u^{m_1}p_0(u)$ and $L$ be an positive integer.
%Therefore, we always have $|p_0(0)|\ge e^{-O(\log n)}$.

Next, for any $L$ with $1\le L\le n^\Delta$, we will lower bound $|\hbar(u)|$, and consequently lower bound $|h|$ when each $z_i$ belongs to the arc $\gamma(L)$. Define a polynomial
\[F(u)=\prod_{1\leq a\leq L'}\hbar_0(ue^{2\pi i\frac{a}{L'}})\] for some integer $L'$.
Then $|F(0)|=|\hbar_0(0)|^{L'}\ge e^{-O(\Delta L'\log n)}$.
By the maximum modulus principle, $|F(u)|\ge |F(0)|\ge e^{-O(\Delta L'\log n)}$ for some $u\in \gamma$. We may choose $a$ such that $u e^{2\pi i\frac{a}{L'}}\in \gamma(L')$, which is still denoted by $u$. Then
\[e^{-O(\Delta L'\log n)}\le |F(u)|\le |\hbar_0(u)| n^{d(L'-1)}\le |\hbar_0(u)|e^{dL' \log n},\]
where we use the fact that $|\hbar_0(u)|\le n^d$. Taking $L'=2dL n^{1-\mu}$, we have
\[|h(z_1,\ldots, z_d)|=|\hbar(u)|=|\hbar_0(u)|\ge e^{-O(\Delta Ln^{1-\mu}\log n)}\]
for some $z_1=u^{b_1}v, z_2=u^{b_2},\ldots, z_d=u^{b_d}$ with $u\in \gamma(2dL n^{1-\mu}), v=e^{\frac{\pi i}{2n^\Delta}}\in \gamma(2n^\Delta)$ and $|b_i|\le dn^{1-\mu}$ for $1\le i\le d$. It is left to show that $z_1,\ldots, z_d\in \gamma(L)$. Indeed,
\begin{align*}
z_i=& u^{b_i}\in \gamma(\frac{2dL n^{1-\mu}}{|b_i|})\subseteq \gamma(2L)\subseteq \gamma(L),  ~~2\le i\le d.
\end{align*}For $z_1$, since $u^{b_1}\in \gamma(2L)$, $v\in \gamma(2n^\Delta)$ and $L\le n^\Delta$, then
\begin{align*}
z_1=& u^{b_1}v\in \gamma(L)\cup \gamma(n^\Delta)= \gamma(L).
\end{align*}
%\begin{align*}
%z_i=& u^{b_i}\in \gamma(\frac{2d n^\mu}{|b_i|})\subseteq \gamma(2n^{2\mu-1})\subseteq \gamma(n^{2\mu-1}),  ~~2\le i\le d,\\
%z_1=& u^{b_1}v\in \gamma(\frac{1}{2}\cdot \frac{2d n^\mu}{|b_1|})\cup \gamma(\frac{1}{2}\cdot 2n^\mu)\subseteq \gamma(n^{2\mu-1})\cup \gamma(n^\mu)= \gamma(n^{2\mu-1}).
%\end{align*}
This completes the proof.

\end{proof}

\vspace{0.3cm}

Note that any $d$-variable polynomial of the form $\sum_{k\in [n]^d} c_kz_1^{k_1}\cdots z_d^{k_d}$  is trivially $1$-sparse, which is the case of  $\mu= 0$ in Theorem~\ref{thm-sparsepolynomial-multivariate-lowerbound}. However, by Claim~\ref{claim-diameter-small} saying that each facet $\beta$ of the hyperpolygon $P$ has a diameter less than $1$, $|\beta\cap H|= 1$ must happen. So the parameter $v$ becomes unnecessary, and the variable substitution and the remaining arguments can be simplified. This is  different from the case of $\mu> 1$, and leads to the following corollary. When $L\geq C n^{\frac{1}{d-1}}$ for a large constant  $C$, Corollary~\ref{cor-polynomial-multivariate-lowerbound} greatly improves the lower bound $e^{-O(L^d \log n)}$ in \cite[Lemma 22]{krishnamurthy2021trace} by Krishnamurthy et al.

\begin{corollary}\label{cor-polynomial-multivariate-lowerbound}
Let $h(z_1,\ldots, z_d)= \sum_{k\in [n]^d} c_kz_1^{k_1}\cdots z_d^{k_d}$ be a nonzero polynomial with each $c_k\in \{0,\pm 1\}$. Then
\[\max_{z_1,\ldots, z_d\in \gamma(L)}|h|\ge e^{-O(Ln \log n)}.\]
\end{corollary}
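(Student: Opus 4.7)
The plan is to redo the argument in the proof of Theorem~\ref{thm-sparsepolynomial-multivariate-lowerbound}, but observe that the case $\mu = 0$ is genuinely simpler because the auxiliary unit root $v$ is never needed. Concretely, set $R = dn$ and $\N(R) = \{b \in \bbZ^d : \gcd(b_1,\ldots,b_d)=1, |b|\le R\}$, view $H = \{k \in [n]^d : c_k \neq 0\}$ as a point set in $E^d$, enclose it in a sphere shell $S$ of radius $\tfrac{\sqrt d}{2}n$, and form the circumscribed hyperpolygon $P$ whose facets have normals in $\N(R)$.

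By Claim~\ref{claim-diameter-small} applied with $\mu = 0$, every facet of $P$ has diameter strictly less than $1$. Translating $P$ until it first touches $H$, any facet $\beta$ of $P$ tangent to $H$ satisfies $|\beta \cap H| \leq 1$, and since $\beta$ does touch $H$ we have $|\beta \cap H| = 1$; write $\beta \cap H = \{k^*\}$, and choose the normal $b = (b_1,\ldots,b_d) \in \N(R)$ pointing to the $H$-side of $\beta$, so that $m_0 := b \cdot k^* = \min\{b \cdot k : k \in H\}$ and $k^*$ is the unique minimizer.

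Now substitute $z_i = u^{b_i}$ for $1 \le i \le d$; no unit-root factor $v$ is required. This gives
\[h(z_1,\ldots,z_d) = \sum_{k \in [n]^d} c_k u^{b \cdot k} =: \hbar(u) = u^{m_0}\hbar_0(u),\]
where the constant term of $\hbar_0$ is $c_{k^*} = \pm 1$, so $|\hbar_0(0)| = 1$. This is the crucial simplification: without $v$, the lower bound on $|\hbar_0(0)|$ is $1$ rather than $e^{-O(\Delta \log n)}$, which is where the $\Delta$-factor in Theorem~\ref{thm-sparsepolynomial-multivariate-lowerbound} was coming from. The rest is the same maximum-modulus trick as in the theorem: for an integer $L' \geq 1$ define $F(u) = \prod_{a=1}^{L'} \hbar_0(u e^{2\pi i a/L'})$; since $|F(0)| = 1$, the maximum modulus principle yields some $u_0 \in \gamma$ with $|F(u_0)| \geq 1$, and after rotating by a suitable $L'$-th root of unity we may take $u \in \gamma(L')$. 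Using the trivial upper bound $|\hbar_0(u)| \leq n^d$ on the other $L'-1$ factors gives $|\hbar_0(u)| \geq e^{-dL' \log n}$.

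Finally, choose $L' = 2dLn$. Then $|b_i| \leq dn$ forces $z_i = u^{b_i} \in \gamma(2L) \subseteq \gamma(L)$ for every $i$, and $|h(z_1,\ldots,z_d)| = |\hbar_0(u)| \geq e^{-O(Ln \log n)}$, which is the desired bound. I do not expect any genuine obstacle here: the only thing to verify carefully is that Claim~\ref{claim-diameter-small} still applies at $\mu = 0$ (the proof in the appendix uses $R = dn^{1-\mu}$ to force facet diameter less than $n^\mu$, so plugging in $\mu = 0$ is fine for $n$ large enough) and that the choice $L' = 2dLn$ indeed forces each $z_i$ into $\gamma(L)$, both of which are routine.
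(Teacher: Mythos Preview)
Your proposal is correct and follows essentially the same approach as the paper. The paper does not give a separate detailed proof of the corollary but explains, in the paragraph immediately preceding it, exactly the simplification you describe: with $\mu=0$ Claim~\ref{claim-diameter-small} forces each facet of $P$ to have diameter less than~$1$, so $|\beta\cap H|=1$ automatically, the auxiliary root $v$ drops out, $|\hbar_0(0)|=1$, and with it the $\Delta$-dependence and the constraint $L\le n^\Delta$; your write-up is a faithful expansion of that sketch.
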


Next, we prove the reduction from  lower bounding a $W$-generating function to lower bounding the corresponding $W$-contiguous generating function, where $W$ is a sub-hypermatrix of full dimensions. This is a multivariate version of Lemma~\ref{lem-sequence-contiguous-function}.

\begin{lemma}\label{lem-hypermatrix-contiguous-function}
For any distinct hypermatrices $X,Y\in \{0,1\}^{n^{\times r}}$ and a hypermatrix $W\in \{0,1\}^{l^{\times r}}$ with $l\le n$, let $g(z_1,\ldots, z_r)$ be the $W$-generating function of $X-Y$ with $z_i=(z_{i0},\ldots, z_{i,l-1})\in \bbC^l, 1\le i\le r$, and let $h(z_{10},\ldots, z_{r0})$ be the $W$-contiguous generating function of $X-Y$. If
\begin{center}
$|h|\ge e^{-O(l\log n)}$ for some $z_{i0}\in \gamma, 1\le i\le r$,
\end{center}
then fixing $\{z_{i0}\}$, there exist $z_{ij}\in [1-2p, 1]$ for $1\le i\le r, 1\le j\le l-1$ such that
\[|g|\ge e^{-O(l\log n)}.\]
\end{lemma}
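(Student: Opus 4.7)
My plan is to induct on the number of dimensions, inflating them from contiguous form to full form one at a time and using Lemma~\ref{lem-sequence-contiguous-function} as the single-variable engine at each step. For $0\le s\le r$, let $g_s$ denote the hybrid generating function in which dimensions $1,\dots,s$ are already in ``full'' form (vector variables $z_i\in\bbC^l$, summed as $z_i^{\odot k_i}$ over $k_i\in\I(n,l)$) while dimensions $s{+}1,\dots,r$ remain in ``contiguous'' form (scalar variables $z_{i0}\in\gamma$, summed as $z_{i0}^{k_i}$ over contiguous blocks $k_i+[l]$). Then $g_0=h$ and $g_r=g$. I would prove by induction on $s$ that there exist $z_{ij}\in[1-2p,1]$ for $1\le i\le s$, $1\le j\le l-1$ at which $|g_s|\ge e^{-O(l\log n)}$.

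\textbf{Inductive step via single-variable reduction.} For the step from $s$ to $s{+}1$, fix all variables already chosen together with the remaining contiguous variables $z_{(s+2)0},\dots,z_{r0}\in\gamma$. Both $g_s$ and $g_{s+1}$ then collapse to one-dimensional expressions in the $(s{+}1)$-th dimension: $g_s=\sum_{k\in[n]}c_k\, z_{(s+1)0}^{k}$ and $g_{s+1}=\sum_{k\in\I(n,l)}\tilde c_k\, z_{s+1}^{\odot k}$, where $c_k$ and $\tilde c_k$ are common complex linear combinations of the indicator differences $1_{X_K=W}-1_{Y_K=W}$ weighted by products of the already-fixed variables from the other dimensions. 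The structural relationship between $g_s$ and $g_{s+1}$ is exactly the contiguous/full relationship of Lemma~\ref{lem-sequence-contiguous-function}; applying that lemma furnishes $z_{(s+1)j}\in[1-2p,1]$, $1\le j\le l-1$, for which $|g_{s+1}|\ge e^{-O(l\log n)}\cdot|g_s|$. Chaining the $r$ rounds costs a total factor $e^{O(rl\log n)}=e^{O(l\log n)}$ since $r\le d$ is a fixed constant, giving the claimed final bound $|g|\ge e^{-O(l\log n)}$.

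\textbf{Main obstacle.} The critical subtlety is that Lemma~\ref{lem-sequence-contiguous-function} is stated for coefficient sequences of the form $1_{x_{k+[l]}=W}-1_{y_{k+[l]}=W}\in\{-1,0,1\}$, whereas in our setting the effective coefficients $c_k,\tilde c_k$ are complex linear combinations of such indicators with weights. Fortunately the weights are mild: each $|z_{i0}|=1$ and each $|z_{ij}|\le 1$ on $[1-2p,1]$, so $|z_i^{\odot k_i}|\le 1$, and the number of summands in each auxiliary dimension is at most $\binom{n}{l}^{s}n^{r-s-1}\le n^{O(l)}$; hence $|c_k|,|\tilde c_k|\le n^{O(l)}$. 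To complete the argument I would verify that the proof of Lemma~\ref{lem-sequence-contiguous-function} extends to coefficient sequences of such polynomial size in $n^{O(l)}$ --- the underlying argument is an analytic/contour manipulation that uses only uniform bounds on the coefficients together with the existence of a point where the contiguous polynomial is large, and not the $\{-1,0,1\}$ structure per se. Formalizing this generalization, or alternatively decomposing the coefficient tuple $(c_k)$ into an $n^{O(l)}$-size superposition of Chase-type $\pm 1$ indicator sequences and applying the lemma to a dominant piece by pigeonhole (absorbing the $n^{O(l)}=e^{O(l\log n)}$ prefactor into the exponent), is the principal technical hurdle of the proof.
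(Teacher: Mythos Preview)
Your inductive skeleton --- peel off the contiguity one dimension at a time --- is exactly what the paper does. The difference is the single-variable engine. The paper does \emph{not} invoke Lemma~\ref{lem-sequence-contiguous-function} at all: instead, at each step it sets $z_{r1}=z_{r2}=\cdots=z_{r,l-1}=:t$, so that the ``partially contiguous'' generating function becomes a one-variable polynomial $\bar g(t)$ whose value at $t=0$ is exactly the fully contiguous function $h$ (since $z^{\odot k_r}$ collapses to $z_{r0}^{k_{r0}}t^{k_{r,l-1}-k_{r0}-(l-1)}$, which is $z_{r0}^{k_{r0}}$ iff $k_r$ is a contiguous block). After normalizing by $\binom{n}{l}^{-r}$ the coefficients of $\bar g$ have modulus at most $1$, and the Borwein--Erd\'elyi--K\'os inequality on $[1-2p,1]$ (Theorem~5.1 of \cite{borwein1999littlewood}) gives $\max_{t\in[1-2p,1]}|\bar g(t)|\ge |\bar g(0)|^{c_1/(2p)}e^{-c_2/(2p)}$, which preserves the $e^{-O(l\log n)}$ scale. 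This sidesteps both obstacles you flagged: bounded complex coefficients are native to the BEK inequality, and you never pick up the $\log^5 n$ loss that is baked into the statement of Lemma~\ref{lem-sequence-contiguous-function}.

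Regarding your two proposed fixes: the first (open up Chase's proof and redo it with $n^{O(l)}$-bounded coefficients) is exactly right and would reproduce the paper's argument, since Chase's Corollary~6.4 is itself proved via the same BEK trick. The second (pigeonhole over a decomposition into $\{0,\pm1\}$ sequences) does not work as stated: even if one summand $B'_\alpha$ is large at $z_0$ and Lemma~\ref{lem-sequence-contiguous-function} produces $z_1,\dots,z_{l-1}$ making $B_\alpha$ large, you have no control over the remaining $n^{O(l)}$ summands at that same point, and they can cancel the dominant piece. Also note a smaller slip: Lemma~\ref{lem-sequence-contiguous-function} as stated only outputs $e^{-O(l\log^5 n)}$, not $e^{-O(l\log n)}$, so even with $\{0,\pm1\}$ coefficients a black-box application would weaken the exponent; you need the underlying BEK estimate, not the packaged corollary.
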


\begin{proof}
For $k=(k_1,\ldots,k_{r})\in [n]^{r-1}\times \I(n,l)$ with $k_r=(k_{r0},\ldots,k_{r,l-1})\in \I(n,l)$, let $\bar{k}$ denote the position of an $l^{\times r}$-sub-hypermatrix that is contiguous along the first $r-1$ dimensions starting at $(k_1,\ldots,k_{r-1})$ and $k_r$ on the last dimension. Consider the partial sum in $g$ over such positions, denoted by
\[\bar{g}(z_1,\ldots, z_r)=\sum_k [1_{X_{\bar{k}}=W}-1_{Y_{\bar{k}}=W}]z_{10}^{k_1}\cdots z_{r-1,0}^{k_{r-1}}\cdot z_r^{\odot k_r},\]where $k$ runs over $[n]^{r-1}\times \I(n,l)$ and $z_i=(z_{i0},\ldots, z_{i,l-1})\in \bbC^l, 1\le i\le r$. %$z_r=(z_{r0},\ldots,z_{r,l-1})\in \bbC^l$.
Fix $z_{i0}\in \gamma, 1\le i\le r$ such that $|h|\ge e^{-O(l\log n)}$. We claim that there exist $z_{rj}\in [1-2p, 1]$ for $1\le j\le l-1$ such that
$|\bar{g}|\ge e^{-O(l\log n)},$ that is, the norm of the partial sum in the $W$-generating function over sub-hypermatrices whose position is contiguous along the first $r-1$ dimensions can also be lower bounded by $e^{-O(l\log n)}$.
Then using the similar argument, we could remove the contiguous conditions of the rest $r-1$ dimensions one by one. Since $r$ is a constant, we finally obtain
$|g|\ge e^{-O(l\log n)},$
for some $z_{ij}\in [1-2p, 1], 1\le i\le r, 1\le j\le l-1$.

%For each $k=(k_1,\ldots,k_{r})\in  \I(n^{\times r},l^{\times t})$ with $t\leq r$, let $\bar{k}$ be the  position of the  $l^{\times r}$-subhypermatrices that are contiguous along the first $r-t$ dimensions.
Now we prove the claim. Consider the case when $z_{r1}=z_{r2}=\ldots=z_{r,l-1}$. Then $\bar{g}$ becomes a single-variable polynomial
\[\bar{g}(z_{r1})=\sum_k [1_{X_{\bar{k}}=W}-1_{Y_{\bar{k}}=W}]z_{10}^{k_1}\cdots z_{r-1,0}^{k_{r-1}}\cdot z_{r0}^{k_{r0}}z_{r1}^{k_{r,l-1}-k_{r0}-(l-1)}.\]
%\[\bar{h}(z_{r1})= \binom{n}{l}^{-r} \sum_k [1_{X_{\bar{k}}=W}-1_{Y_{\bar{k}}=W}]z_{10}^{k_1}\cdots z_{r-1,0}^{k_{r-1}}\cdot z_{r0}^{k_{r0}}z_{r1}^{k_{r,l-1}-k_{r0}-(l-1)},\]
%where $k=(k_1,\ldots,k_{r})\in [n]^{r-1}\times \I(n,l)$ with $k_r=(k_{r0},\ldots,k_{r,l-1})$. Here $\bar{k}$ denotes the position of an $l^{\times r}$-sub-hypermatrix that is contiguous along the first $r-1$ dimensions starting at $(k_1,\ldots,k_{r-1})$ and $k_r\in \I(n,l)$ on the last dimension.
Note that when $z_{r1}=0$, we have $z_{r1}^{k_{r,l-1}-k_{r0}-(l-1)}= 1$ only when $k_{r,l-1}-k_{r0}-(l-1)= 0$, that is, $k_r=k_{r0}+[l]$. So, $\bar{g}(0)$ keeps the terms corresponding to contiguous sub-hypermatrices.
Thus $\bar{g}(0)= h(z_{10},\ldots, z_{r0})$. Denote
$\bar{h}(z_{r1})= \binom{n}{l}^{-r} \bar{g}(z_{r1}).$
Since each coefficient of $\bar{h}(z_{r1})$ has norm at most $1$, then by \cite[Theorem 5.1]{borwein1999littlewood}, there are absolute constants $c_1,c_2>0$, such that
\begin{align*}
 \max_{z_{r1}\in [1-2p,1]}|\bar{h}(z_{r1})|\ge&  |\bar{h}(0)|^{c_1/(2p)}e^{-c_2/(2p)}\ge
  \left(\binom{n}{l}^{-r} e^{-O(l\log n)}\right)^{c_1/(2p)}e^{-c_2/(2p)}\\
\ge& e^{-O(l\log n)}.
\end{align*}
%\begin{align*}
%\binom{n}{l}^r \max_{z_{r1}\in [1-2p,1]}|f(z_{r1})|\ge& \binom{n}{l}^r |f(0)|^{c_1/(2p)}e^{-c_2/(2p)}\ge
%  \binom{n}{l}^r \left(\binom{n}{l}^{-r} e^{-O(l\log n)}\right)^{c_1/(2p)}e^{-c_2/(2p)}\\
%\ge& e^{-O(l\log n)}.
%\end{align*}
Taking a $z_{r1}$ realizing this maximum, we have
\[|\bar{g}(z_{r1})|=\binom{n}{l}^r |\bar{h}(z_{r1})|\ge e^{-O(l\log n)}.\]Thus the claim is proved.
%
%
%and then setting $z_{r2},\ldots, z_{r,l-1}= z_{r1}$, we have
%\[|\sum_k [1_{X_{\bar{k}}=W}-1_{Y_{\bar{k}}=W}]z_{10}^{k_1}\cdots z_{r-1,0}^{k_{r-1}}\cdot z_r^{\odot k_r}|=\binom{n}{l}^r \max_{z_{r1}\in [1-2p,1]}|f(z_{r1})|\ge e^{-O(l\log n)},\]
%where $k$ is the same as in function $f$. % runs through the index sets of all $l^{\times r}$-subhypermatrices that are contiguous along the first $r-1$ dimensions.
%That is, {\color{red}the norm of the partial sum in the $W$-generating function over sub-hypermatrices whose position is contiguous along the first $r-1$ dimensions can also be lower bounded by $e^{-O(l\log n)}$.}
%Similarly, we remove the contiguous conditions of the rest $r-1$ dimensions one by one. Since $r$ is a constant, we finally obtain
%\[|g|\ge e^{-O(l\log n)},\]
%for some $z_{ij}\in [1-2p, 1], 1\le j\le l-1$.
\end{proof}

\vspace{0.3cm}

Now, we are able to give the existence of a hypermatrix $W$ such that the $W$-generating function has a big norm at some restricted point.

\begin{lemma}\label{lem-function-lowerbound-hypermatrix}
For any distinct hypermatrices $X,Y\in \{0,1\}^{n^{\times d}}$, there exists some $W\in \{0,1\}^{l^{\times r}}$ with $l= 4n^{3/5}+1$ and $ 0\le r\le d$,
%such that the $W$-generating function $g(z_1, z_2)$ of $X-Y$ satisfiesO(n^{5/9}\log^{3/2}n)
 such that the $W$-generating function $g$ of $X-Y$ satisfies
\[|g(z)|\ge \exp(-C n^{3/5}\log n) \text{ for some $w\in \bbC(d,l^{\times r})$ with } \ell(w)\le \exp(\frac{C}{n^{2/5}}),\]
where $C> 0$ is a constant.
\end{lemma}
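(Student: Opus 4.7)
The plan is to adapt the case analysis from the proof of Lemma~\ref{lem-function-lowerbound-matrix}, but to exploit Theorem~\ref{thm-sparsepolynomial-multivariate-lowerbound} and Lemma~\ref{lem-hypermatrix-contiguous-function} so that the ``sparse'' window is now multivariate and $r$ is allowed to range up to $d$. First I would apply the dimension reduction $\R\colon A = X - Y \to A^{d-1} \to \cdots \to A^1 \to \pm 1$ to obtain non-decreasing $0\le \lambda_d\le \cdots \le \lambda_1\le \lfloor (n-1)/2\rfloor$, and let $r\in\{0,1,\ldots,d\}$ be the largest index with $\lambda_r\ge l$, where $l = 4n^{3/5}+1$. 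This $l$ is chosen so that the sparsity parameter $s = \lfloor (l-1)/4\rfloor = n^{3/5}$ corresponds to $\mu = 3/5$ in Theorem~\ref{thm-sparsepolynomial-multivariate-lowerbound}. The maximality of $r$ is the essential ingredient: it forces $\lambda_{r+1},\ldots,\lambda_d < l$.

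In Case 1 ($r=0$, so $\lambda_1 < l$), I would set $W = 1$ so that the $W$-generating function of $X-Y$ is the ordinary generating function $f = f_d$ satisfying the recursion Eq.~(\ref{f-gener}). Starting from $f_1(z_1) = z_1^{\lambda_1}(\pm 1 + z_1\cdot *)$, the ``in particular'' case of Lemma~\ref{lem-polynomial-lowerbound} applied with $m = l$ gives $|f_1|\ge e^{-Cl^{1/3}}$. Iterating Lemma~\ref{lem-polynomial-lowerbound} along the recursion with $m = l$---which is permitted because every $\lambda_i < l$ and the coefficient-sum bound $\sum_j|c_j|\le n^d = e^{O(\log n)}$ is absorbed by $l^{a_i}$ once $a_i\ge 1/3$---the exponents evolve by $a_{i+1} = (1+2a_i)/3$ with $a_1 = 1/3$, giving $a_d = 1-(2/3)^d < 1$. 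Hence $|f|\ge e^{-C_d l^{1-(2/3)^d}}$, which is stronger than the target $e^{-Cn^{3/5}\log n}$ for any fixed $d$.

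In Case 2 ($r\ge 1$), Lemma~\ref{lem-construction-Wgen-function} produces $W\in\{0,1\}^{l^{\times r}}$ such that the $W$-contiguous generating function $h$ of $A^r$ is nonzero and $n^{3/5}$-sparse. I would apply Theorem~\ref{thm-sparsepolynomial-multivariate-lowerbound} with $\mu = 3/5$, $\Delta = 1$, and $L = n^{1/5}$ to obtain $|h|\ge e^{-O(n^{3/5}\log n)}$ at some point with each $z_{i0}\in\gamma(n^{1/5})$. Since $n^{3/5}\log n = \Theta(l\log n)$, this meets the hypothesis of Lemma~\ref{lem-hypermatrix-contiguous-function}, whose application yields $|g_r|\ge e^{-O(n^{3/5}\log n)}$ after fixing further $z_{ij}\in[1-2p, 1]$ for $1\le i\le r$, $1\le j\le l-1$. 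Finally I would lift $g_r\to g_{r+1}\to\cdots\to g_d$ using the recursion Eq.~(\ref{g-gener}) and Lemma~\ref{lem-polynomial-lowerbound}. At each such step, because $\lambda_{i+1}<l$ and the coefficient-sum bound is $\sum_j|c_j|\le n\binom{n}{l}^r n^{d-r-1} = e^{O(l\log n)}$, I can take $m = C_0 l\log n$ with $a = 1$, producing $|g_{i+1}|\ge e^{-Cm} = e^{-O(n^{3/5}\log n)}$ of the same order. Since $d$ is fixed, the constants compound only to $O_d(1)$ over the $d-r$ lifts, giving $|g|\ge e^{-Cn^{3/5}\log n}$.

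The estimate $\ell(w)\le e^{C/n^{2/5}}$ is assembled from $|w_{i0}|\le e^{O(n^{-2/5})}$ (from $z_{i0}\in\gamma(n^{1/5})$ via Eq.~(\ref{eq-estimation-w})), $|w_{ij}|\le 1$ for $j\ge 1$ (from $z_{ij}\in[1-2p,1]$), and $|w_j|\le 1$ for $r<j\le d$ (from Lemma~\ref{lem-polynomial-lowerbound}). The main obstacle I anticipate is precisely in the lifting step of Case 2: naively iterating Lemma~\ref{lem-polynomial-lowerbound} with $m = n$ (as was forced in the matrix and cube proofs, where $\lambda_2$ could be large) would apply the map $a\mapsto (1+2a)/3$ at every lift and blow the exponent past $3/5$ once $d\ge 4$. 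The fix, and the reason the bound does not deteriorate with $d$, is to use the maximality of $r$ so that $\lambda_{r+1},\ldots,\lambda_d<l$, allowing the much smaller choice $m = O(l\log n)$ with $a=1$ at every remaining lift, so the exponent stays at $O(n^{3/5}\log n)$. This is exactly where Theorem~\ref{thm-sparsepolynomial-multivariate-lowerbound} pays off, since it permits $r$ to range all the way up to $d$ instead of being capped at $1$ as in the matrix and cube settings.
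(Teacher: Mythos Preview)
Your proposal is correct and follows essentially the same route as the paper's proof: dimension reduction, a case split on how many $\lambda_i$ exceed $l$, with Theorem~\ref{thm-sparsepolynomial-multivariate-lowerbound} and Lemma~\ref{lem-hypermatrix-contiguous-function} handling the sparse multivariate window and Lemma~\ref{lem-polynomial-lowerbound} (with $m=O(l\log n)$, $a=1$) handling the remaining lifts thanks to the maximality of $r$. The only cosmetic difference is where the boundary is placed: the paper splits at $\lambda_2<l$ versus $r\ge 2$, so its Case~1 also absorbs the $r=1$ situation by applying Borwein--Erd\'elyi directly to $f_1$ (obtaining $|f_1|\ge e^{-O(l)}$ on $\gamma(l)$ and then iterating Lemma~\ref{lem-polynomial-lowerbound} with $a=1$ throughout), whereas you route $r=1$ through Case~2 via the single-variable instance of Theorem~\ref{thm-sparsepolynomial-multivariate-lowerbound}; both choices work.
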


\begin{proof} Let $A^d=X-Y$.
Suppose that $X$ and $Y$ are $(\lambda_1,\lambda_2,\ldots, \lambda_d)$-identical with respect to a dimension reduction $\R: A^d\to A^{d-1}\to \cdots \to A^1\to \pm1$. %Let $l= 4n^{3/5}+1$ be the threshold.

\textbf{Case 1}. $\lambda_2< l$.

Set $r=0$ and $W=1$. Let $f_i$ be the generating function of $A^i$ for $1\le i\le d$. Then
\[f_{i+1}(z_1,\ldots, z_{i+1})= z_{i+1}^{\lambda_{i+1}}(f_i(z_1,\ldots, z_i)+ z_{i+1}\cdot *), ~1\le i\le d-1.\]
Since $f_1(z_1)$ is a Littlewood polynomial, we can apply \cite[Corollary 3.2]{borwein1997littlewood} to get
\[|f_1(z_1)|\ge e^{-O(l)} \text{ for some } z_1\in \gamma(l).\]
Then the corresponding $w_1$ satisfies $|w_1|\le e^{O(l^{-2})}$ by Eq.~(\ref{eq-estimation-w}).
Fix $w_1$ and $z_1$. %and let $f'(z_2)=f_2(z_1,z_2)$. We apply Lemma~\ref{lem-polynomial-lowerbound} to $f'(z_2)$. Since $m_0=\lambda_2< l$, $|c_0|=|f_1(z_1)|\ge e^{-O(l)}$, and $\sum_i|c_i|$ is at most the number of nonzero terms in the Littlewood polynomial $f_2(z_1,z_2)$, which is upper bounded by $n^2\le e^{O(l)}$, then by =|f'(z_2)|
Since $\lambda_2< l$, we can apply Lemma~\ref{lem-polynomial-lowerbound} to $f_2(z_1,z_2)$ with $m=l$ and $a=1$ to get
\begin{center}
$|f_2(z_1,z_2)|\ge e^{-O(l)}$ for some $w_2\in \bbC$ with $|w_2|\le 1$.
\end{center}
Repeatedly applying Lemma~\ref{lem-polynomial-lowerbound} $(d-1)$ times, we finally obtain
\begin{center}
$|f_d|\ge e^{-O(l)}$ for some $\{w_i\}$ with $|w_i|\le 1, 2\le i\le d$ and $|w_1|\le e^{O(l^{-2})}$.
\end{center}

\textbf{Case 2}. $\lambda_{r+1}< l\le \lambda_r$ for some $r\ge 2$. %{\color{blue}(see details below)}

Take the hypermatrix $W\in \{0,1\}^{l^{\times r}}$ in Lemma~\ref{lem-construction-Wgen-function} and let $g(z)$ be the $W$-generating function of $X-Y$, where $z=(z_1,\ldots, z_d)\in \bbC(d,l^{\times r})$. We will show that for some $w=(w_1,\ldots, w_d)\in \bbC(d,l^{\times r})$,
\begin{center}
$|g(z)|\ge e^{-O(n^{3/5} \log n)}$  with $\prod_{i=1}^{r}\ell(w_i)\le e^{O(n^{-2/5})}$ and $|w_i|\le 1, r+1\le i\le d$.
\end{center}
Let $g_i$ be the $W$-generating function of the $i$-dimensional sub-hypermatrix $A^i$ for $r\le i\le d-1$. Then we have the same recursions as in Eq. (\ref{g-gener}).
%  \begin{align*}
%      g(z_1,\ldots, z_d)&= z_1^{\lambda_1}(g_{d-1}(z_2,\ldots, z_d)+ z_1\cdot *),\\
%      &\cdots\\
%      g_{k+1}(z_{d-k},\ldots, z_d)&= z_{d-k}^{\lambda_{d-k}}(g_k(z_{d-k+1},\ldots, z_d)+ z_{d-k}\cdot *),\\
%      &\cdots\\
%      g_{r+1}(z_{d-r},\ldots, z_d)&=z_{d-r}^{\lambda_{d-r}}(g_r(z_{d-r+1},\ldots, z_d)+ z_{d-r}\cdot *),
%  \end{align*}
%where $g_k$ is the generating function of the $k$-dimensional subhypermatrix $A^k$ for $r\le k\le d-1$ and $*$ denotes uncertain polynomials.
Let $h(z_{10},\ldots, z_{r0})$ be the $W$-contiguous generating function of $A^r$. By Lemma~\ref{lem-construction-Wgen-function} (b), $h$ is nonzero and $n^{3/5}$-sparse. We apply Theorem~\ref{thm-sparsepolynomial-multivariate-lowerbound} for $\mu=3/5,L=n^{1/5},\Delta=1$ to get
\[|h(z_{10},\ldots, z_{r0})|\ge e^{-O(n^{3/5} \log n)} \text{ for some } z_{10},\ldots, z_{r0}\in \gamma(n^{1/5}).\]
Then by Lemma~\ref{lem-hypermatrix-contiguous-function}, for some $z_{10},\ldots, z_{r0}\in \gamma(n^{1/5})$ and $z_{ij}\in [1-2p,1], 1\le i\le r, 1\le j\le l-1$,
\[|g_r(z_1,\ldots, z_r)|\ge e^{-O(n^{3/5} \log n)}.\]
By Eq.~(\ref{eq-estimation-w}), we have $\prod_{i=1}^{r}\ell(w_i)\le e^{O(n^{-2/5})}$ correspondingly.
%\begin{center}
%$|g_r|\ge e^{-O(n^{3/5} \log n)}$ for some $w_1,\ldots, w_r$ with  $\prod_{i=1}^{r}L(w_i)\le e^{O(n^{-2/5})}$.
%\end{center}$ is at most the number of nonzero terms in the Littlewood polynomial $g_{r+1}(z_1,\ldots,z_r, z_{r+1})$, which is upper bounded by $
Fixing $w_1,\ldots, w_r$ and $z_1,\ldots, z_r$, and let $g'(z_{r+1})=g_{r+1}(z_1,\ldots,z_r, z_{r+1})$. We apply Lemma~\ref{lem-polynomial-lowerbound} to $g'(z_{r+1})$. Since $m_0=\lambda_{r+1}< l\le n^{3/5} \log n$, $|c_0|=|g_r|\ge e^{-O(n^{3/5} \log n)}$, and $\sum_i|c_i|\leq n\binom{n}{l}^r \le e^{O(l \log n)}\le e^{O(n^{3/5} \log n)}$,  by Lemma~\ref{lem-polynomial-lowerbound} with $m=n^{3/5} \log n$ and $a=1$, we have
\begin{center}
$|g_{r+1}(z_1,\ldots,z_r, z_{r+1})|= |g'(z_{r+1})|\ge e^{-O(n^{3/5} \log n)}$ for some $w_{r+1}\in \bbC$ with $|w_{r+1}|\le 1$.
\end{center}
 Applying Lemma~\ref{lem-polynomial-lowerbound} $(d-r)$ times, we finally obtain
\begin{center}
$|g|\ge e^{-O(n^{3/5} \log n)}$  with $\prod_{i=1}^{r}\ell(w_i)\le e^{O(n^{-2/5})}$ and $|w_i|\le 1, r+1\le i\le d$.
\end{center}

Combining Case 1 and Case 2, we complete the proof.
\end{proof}

\vspace{0.3cm}

%\begin{proof}[Proof of Theorem~\ref{theorem-3}]
Combining Lemma~\ref{lem-reduceto-Wgen-function} and Lemma~\ref{lem-function-lowerbound-hypermatrix} with $m=n^{3/5}\log n$, Theorem~\ref{theorem-3} (3) immediately follows.
%\end{proof}

%
%\section{Conclusion}
%In this paper, we study a multidimensional version of the trace reconstruction problem of sequences. We develops the Littlewood polynomial approach on sequence case, and provide new upper bounds for trace reconstruction of hypermatrices of general dimension.

\section*{Appendix}

\subsection{Proof of Eq. (\ref{wgeriden})}
\begin{proof}
For $r+1\le i\le d$, from $z_i =pw_i+q$, we have $ pz_i^{k_i}=p(pw_i+q)^{k_i}=\sum_{j_i}\binom{k_i}{j_i}p^{j_i+1}q^{k_i-j_i}\cdot w_i^{j_i}$. For $1\le i\le r$, we have
\begin{align*}
&p^l z_i^{\odot k_i}=p^l z_{i0}^{k_{i0}}z_{i1}^{k_{i1}-k_{i0}-1} \cdots z_{i,l-1}^{k_{i,l-1}-k_{i,l-2}-1}\\
=&p^l (pw_{i0}+q)^{k_{i0}}(pw_{i1}+q)^{k_{i1}-k_{i0}-1} \cdots (pw_{i,l-1}+q)^{k_{i,l-1}-k_{i,l-2}-1}\\
  =&p^l \sum_{j_{i0}}\binom{k_{i0}}{j_{i0}} p^{j_{i0}}q^{k_{i0}-j_{i0}} w_{i0}^{j_{i0}}\cdot
  \{\sum_{j_{i1}> j_{i0}}\binom{k_{i1}-k_{i0}-1}{j_{i1}-j_{i0}-1} p^{j_{i1}-j_{i0}-1}q^{(k_{i1}-k_{i0})-(j_{i1}-j_{i0})} w_{i1}^{j_{i1}-j_{i0}-1}\\
  &\cdot [\cdots (\sum_{j_{i,l-1}> j_{i,l-2}}\binom{k_{i,l-1}-k_{i,l-2}-1}{j_{i,l-1}-j_{i,l-2}-1} p^{j_{i,l-1}-j_{i,l-2}-1}q^{(k_{i,l-1}-k_{i,l-2})-(j_{i,l-1}-j_{i,l-2})} w_{i,l-1}^{j_{i,l-1}-j_{i,l-2}-1})]
  \}\\
  =&\sum_{j_i}\binom{k_{i0}}{j_{i0}} \prod_{h=1}^{l-1}\binom{k_{ih}-k_{i,h-1}-1}{j_{ih}-j_{i,h-1}-1}  p^{j_{i,l-1}+1}q^{k_{i,l-1}-j_{i,l-1}}\cdot w_i^{\odot j_i}.
 % =&p^l z_{i0}^{k_{i0}}z_{i1}^{k_{i1}-k_{i0}-1} \cdots z_{i,l-1}^{k_{i,l-1}-k_{i,l-2}-1}
\end{align*}
Therefore, the left hand side of the $W$-generating identity is
\begin{align*}
  &\sum_j\cdot \sum_k 1_{\widetilde{X}_k=W} [\binom{k_{1,0}}{j_{1,0}} \prod_{h=1}^{l-1}\binom{k_{1,h}-k_{1,h-1}-1}{j_{1,h}-j_{1,h-1}-1}  p^{j_{1,l-1}+1}q^{k_{1,l-1}-j_{1,l-1}}\cdot w_1^{\odot j_1}]\\
  &\cdots [\binom{k_{r,0}}{j_{r,0}} \prod_{h=1}^{l-1}\binom{k_{r,h}-k_{r,h-1}-1}{j_{r,h}-j_{r,h-1}-1}  p^{j_{r,l-1}+1}q^{k_{r,l-1}-j_{r,l-1}}\cdot w_r^{\odot j_r}]\\
  &\cdot [\binom{k_{r+1}}{j_{r+1}}p^{j_{r+1}+1}q^{k_{r+1}-j_{r+1}}\cdot w_{r+1}^{j_{r+1}}]\cdots [\binom{k_d}{j_d}p^{j_d+1}q^{k_d-j_d}\cdot w_d^{j_d}]\\
  =&\sum_k 1_{\widetilde{X}_k=W} \sum_j \cdots = \sum_k 1_{\widetilde{X}_k=W} \sum_{j_0}\sum_{j_1}\cdots \sum_{j_d} \cdots\\
  =&\sum_k 1_{\widetilde{X}_k=W} (p^l z_1^{\odot k_1})\cdots (p^l z_r^{\odot k_r})\cdot (p z_{r+1}^{k_{r+1}})\cdots (p z_d^{k_d})\\
  =&p^{rl +d-r}g(z_1,\ldots, z_d).
\end{align*}
\end{proof}

\subsection{Proof of Lemma~\ref{lem-function-lowerbound-cube}}

\begin{proof}%{\color{red}( may move the proof to appendix)}
Suppose that $X$ and $Y$ are $(\lambda_1,\lambda_2,\lambda_3)$-identical with respect to a dimension reduction $\R: A=X-Y\to A^2\to A^1\to \pm1$. %Let $l= 4n^{1/9}+1$ be the threshold.

\textbf{Case 1}. $\lambda_1< l$.

Set $r=0$ and $W=1$. Let $f(z_1,z_2,z_3)$ be the generating function of $A$. Similar to Case 1 of matrices, we apply Lemma~\ref{lem-polynomial-lowerbound} three times to get
\begin{center}
$|f|\ge e^{-O(l^{19/27})}$ for some $w_1,w_2,w_3\in \bbC$ with $|w_1|,|w_2|,|w_3|\le 1$.
\end{center}

\textbf{Case 2}. $\lambda_1\ge l$.

Set $r=1$ and take the sequence $W\in \{0,1\}^l$ in Lemma~\ref{lem-construction-Wgen-function}. Let $g(z_1,z_2,z_3),g_2(z_1,z_2),g_1(z_1)$ be the $W$-generating function of $A,A^2,A^1$, respectively, where $(z_1,z_2,z_3)\in \bbC(3,l^{\times 1})$.
By Eq. (\ref{g-gener}), we write
\[g(z_1,z_2,z_3)=z_3^{\lambda_3}\left(g_2(z_1,z_2)+z_3g'_2(z_1,z_2)\right),\text{ and }g_2(z_1,z_2)=z_2^{\lambda_2}(g_1(z_1)+z_2g'_1(z_1)),\]
%\[g(z_1,z_2,z_3)=z_3^{\lambda_3}\left(z_2^{\lambda_2}(g_1(z_1)+z_2\cdot *)+z_3\cdot *\right),\]
%where $g_1$ is the $W$-generating functions of $A^1$. %and $*$ denotes uncertain polynomials.
where $g'_2$ and $g'_1$ are polynomials. Similar to Case 2 of matrices, we have
\begin{center}
$|g_1|\ge e^{-O(n^{1/9}\log^5 n)}$ for some $w_1\in \bbC^l$ with  $\ell(w_1)\le e^{O(n^{-4/9})}$.
\end{center}
Fix $w_1$ and $z_1$, and define the polynomial for some integer $L$,
\[F(z_2,z_3)=\prod_{1\leq a,b\leq L}g(z_1, z_2e^{2\pi i\frac{a}{L}}, z_3e^{2\pi i\frac{b}{L}}).\]
Further, define
\begin{align*}G(z_2,z_3)&=\prod_{1\leq a,b\leq L}\left(g_2(z_1,z_2e^{2\pi i\frac{a}{L}})+z_3e^{2\pi i\frac{b}{L}}g'_2(z_1,z_2e^{2\pi i\frac{a}{L}})\right),\\
H(z_2)&=\prod_{1\leq a\leq L}\left(g_1(z_1)+z_2e^{2\pi i\frac{a}{L}}g'_1(z_1)\right).
\end{align*}
%\begin{align*}G(z_2,z_3)&=\prod_{1\leq a,b\leq L}\left((z_2e^{2\pi i\frac{a}{L}})^{\lambda_2}(g_1(z_1)+z_2e^{2\pi i\frac{a}{L}}\cdot *)+z_3e^{2\pi i\frac{b}{L}}\cdot *\right),\\
%H(z_2)&=\prod_{1\leq a\leq L}(g_1(z_1)+z_2e^{2\pi i\frac{a}{L}}\cdot *).
%\end{align*}
Then for any $z_2,z_3\in \gamma$, $|G(z_2,z_3)|=|F(z_2,z_3)|$, $|G(z_2,0)|=|H(z_2)|^L$ and $|H(0)|=|g_1(z_1)|^{L}$.
%When $z_2,z_3\in \gamma$, define two polynomials $G(z_2,z_3),H(z_2)$ as
%\begin{align*}
%F(z_2,z_3)&= \prod_{1\leq b\leq L}(z_3e^{2\pi i\frac{b}{L}})^{\lambda_3 L} \prod_{1\leq a,b\leq L}\left((z_2e^{2\pi i\frac{a}{L}})^{\lambda_2}(g_1(z_1)+z_2\cdot *)+z_3\cdot *\right)= \prod_{1\leq b\leq L}(z_3e^{2\pi i\frac{b}{L}})^{\lambda_3 L} G(z_2,z_3),\\
%G(z_2,0)&= \prod_{1\leq a\leq L}(z_2e^{2\pi i\frac{a}{L}})^{\lambda_2 L}\prod_{1\leq a\leq L}(g_1(z_1)+z_2\cdot *)^L= \prod_{1\leq a\leq L}(z_2e^{2\pi i\frac{a}{L}})^{\lambda_2 L} H(z_2).
%%|F(z_2,z_3)|&=|\prod\left((z_2e^{2\pi i\frac{a}{L}})^{\lambda_2}(g_1(z_1)+z_2\cdot *)+z_3\cdot *\right)|=|G(z_2,z_3)|,\\
%%|G(z_2,0)|&=|\prod(g_1(z_1)+z_2\cdot *)|=|H(z_2)|.
%\end{align*}
%Hence $|F(z_2,z_3)|=|G(z_2,z_3)|$, $|G(z_2,0)|=|H(z_2)|$ and $|H(0)|=|g_1(z_1)|^{L^2}$.
By the maximum modulus principle, there exist some $z_2,z_3\in \gamma$ such that
\[|F(z_2,z_3)|=|G(z_2,z_3)|\geq |G(z_2,0)|=|H(z_2)|^L\geq |H(0)|^L= |g_1(z_1)|^{L^2}\geq \exp(-O(n^{1/9} L^2\log^5 n)).\]
We may choose $a,b$ such that $z_2 e^{2\pi i\frac{a}{L}},z_3 e^{2\pi i\frac{b}{L}}\in \gamma(L)$, which are still denoted by $z_2,z_3$, respectively. Then
\begin{align*}
\exp(-O(n^{1/9} L^2\log^5 n))\leq |F(z_2,z_3)|\leq |g(z_1,z_2,z_3)| \left(n^2\binom{n}{l}\right)^{L^2-1}\leq |g(z_1,z_2,z_3)|\exp(7n^{1/9}L^2\log n),
\end{align*}
where we use the fact that $|g(z_1,z_2,z_3)|\leq n^2\binom{n}{l}$. By Eq.~(\ref{eq-estimation-w}), $|w_2|, |w_3|\le \exp(O(L^{-2}))$. Taking $L=n^{2/9}/ \log^{5/4}n$,% and by Eq.~(\ref{eq-estimation-w}),
\[|g|\ge \exp(-O(n^{5/9}\log^{5/2}n)) \text{ for some $w=(w_1, w_2, w_3)\in \bbC(3,l^{\times r})$ with } \ell(w)\le \exp(O(\frac{\log^{5/2}n}{n^{4/9}})).\]

Combining Case 1 and Case 2, we complete the proof.
\end{proof}

\subsection{Proof of Claim~\ref{claim-diameter-small}}
\begin{proof}
Let $n$ be sufficiently large, then $R= dn^{1-\mu}$ is large. Assume that the center of $S$ is the origin $\vo$. Take $\vx_0\in P$ maximizing the Euclidean distance from $\vo$. Let $\overline{\vo\vx_0}$ be the segment ending at $\vo$ and $\vx_0$, then take a point $\vx\in \overline{\vo\vx_0}$ such that $\|\vx\|= R-\sqrt{d}$. Without loss of generality, let $\vx= (x_1,\ldots, x_d)$ with $x_1\ge x_2\ge \cdots \ge x_d\ge 0$. Clearly $x_1\ge \frac{R}{\sqrt{d}}-1$.
Take another point $\vy\in \overline{\vo\vx_0}$ such that $\vy= (p, y_2,\ldots, y_d)$ with $p\ge y_2\ge \cdots \ge y_d\ge 0$, where $p$ is the largest prime satisfying $p\le x_1$. Clearly $p= (1-o(1))x_1$ and $\|\vy\|= (1-o(1))(R-\sqrt{d})$.
Take $\va= (p, a_2,\ldots, a_d)\in \bbZ^d$ with $a_i\in [p-1]$ and $|a_i-y_i|\le 1$ for $2\le i\le d-1$. Clearly, $\gcd(p, a_2,\ldots, a_d)= 1$, $\|\va- \vy\|\le \sqrt{d-1}$ and thus $\|\va\|= (1-o(1))R$. Therefore, $\va\in \N(R)$.

Let $\theta$ be the angle formed by $\overline{\vo\va}$ and $\overline{\vo\vy}$. Since $\|\va- \vy\|= o(\|\va\|)$, we have
\[\tan \theta\le (1+o(1))\frac{\|\va- \vy\|}{\|\va\|}\le (1+o(1))\frac{\sqrt{d-1}}{R}.\]
Extend the segment $\overline{\vo\va}$ to intersect $S$ at point $\va_0$, then $\|\va_0\|= \frac{\sqrt{d}}{2}n$.
Since $\va\in \N(R)$, $\va_0$ is a tangent point of $S$ and some facet $\alpha$ of $P$.
Extend $\overline{\vo\vx_0}$ to intersect $\alpha$ at a point $\vy_0$, then $\overline{\va_0\vy_0}$ and $\overline{\va_0\vo}$ span a right angle. On the other hand, by the extreme property of $\vx_0$, the point $\vy_0$ is either the point $\vx_0$ itself or outside $P$.
Therefore, each facet of $P$ has a diameter at most
\[2\|\vy_0-\va_0\|= 2\|\va_0\| \tan\theta \le (1+o(1))\frac{\sqrt{d(d-1)}n}{R}< \frac{dn}{R}= n^\mu.\]
\end{proof}


\begin{thebibliography}{199}
 \setlength{\itemsep}{-2mm}  	
\bibitem{levenshtein1997reconstruction}
V.~Levenshtein, ``Reconstruction of objects from the minimum number of distorted patterns,''
  \emph{Doklady Akademii Nauk}, vol.~354, no.~5, pp. 593--596, 1997.

\bibitem{levenshtein2001efficient}
V.~Levenshtein, ``Efficient reconstruction of sequences,''
  \emph{IEEE Transactions on Information Theory}, vol.~47, no.~1, pp. 2--22, 2001.

\bibitem{levenshtein2001efficientfrom}
V.~Levenshtein, ``Efficient reconstruction of sequences from their subsequences or supersequences,''
  \emph{Journal of Combinatorial Theory, Series A}, vol.~93, no.~2, pp. 310--332, 2001.

\bibitem{konstantinova2007reconstruction}
E.~Konstantinova, V.~Levenshtein, and J.~Siemons ``Reconstruction of permutations distorted by single transposition errors,''
  \emph{arXiv preprint math/0702191}, 2007.

\bibitem{gabrys2018sequence}
R.~Gabrys and E.~Yaakobi, ``Sequence reconstruction over the deletion channel,''
  \emph{IEEE Transactions on Information Theory}, vol.~64, no.~4, pp. 2924--2931, 2018.

\bibitem{ye2023reconstruction}
Z.~Ye, X,~Liu, X.~Zhang, and G.~Ge, ``Reconstruction of Sequences Distorted by Two Insertions,''
  \emph{IEEE Transactions on Information Theory}, 2023.

\bibitem{kalashnik1973reconstruction}
L.~Kalashnik, ``The reconstruction of a word from fragments,'' \emph{Numerical
  Mathematics and Computer Technology}, pp. 56--57, 1973.

\bibitem{krasikov1997reconstruction}
I.~Krasikov and Y.~Roditty, ``On a reconstruction problem for sequences,''
  \emph{Journal of Combinatorial Theory, Series A}, vol.~77, no.~2, pp.
  344--348, 1997.

\bibitem{foster2000improvement}
W.~Foster and I.~Krasikov, ``{An improvement of a Borwein--Erd{\'e}lyi--K{\'o}s
  result},'' \emph{Methods and Applications of Analysis}, vol.~7, no.~4, pp.
  605--614, 2000.

\bibitem{dudik2003reconstruction}
M.~Dud{\i}k and L.~J. Schulman, ``Reconstruction from subsequences,''
  \emph{Journal of Combinatorial Theory, Series A}, vol. 103, no.~2, pp.
  337--348, 2003.

\bibitem{kos2009reconstruction}
G.~K{\'o}s, P.~Ligeti, and P.~Sziklai, ``Reconstruction of matrices from
  submatrices,'' \emph{Mathematics of Computation}, vol.~78, no. 267, pp.
  1733--1747, 2009.

\bibitem{zhang2024reconstruction}
W.~Zhong and X.~Zhang, ``Reconstruction of hypermatrices from subhypermatrices,''
  \emph{arXiv preprint arXiv:2401.03906}, 2024.

\bibitem{o1970ulam}
P.~V. O'Neil, ``Ulam's conjecture and graph reconstructions,'' \emph{The
  American Mathematical Monthly}, vol.~77, no.~1, pp. 35--43, 1970.

\bibitem{bondy1977graph}
J.~A. Bondy and R.~L. Hemminger, ``Graph reconstruction---a survey,'' \emph{Journal of Graph Theory}, vol.~1, no.~3, pp. 227--268, 1977.

\bibitem{bollobas1990almost}
B.~Bollob{\'a}s, ``Almost every graph has reconstruction number three,'' \emph{Journal of Graph Theory}, vol.~14, no.~1, pp. 1--4, 1990.

\bibitem{lauri2016topics}
J.~Lauri and R.~Scapellato, ``Topics in graph automorphisms and reconstruction,'' \emph{Cambridge University Press}, vol.~432, 2016.

\bibitem{kostochka2020reconstruction}
A.~V. Kostochka and D.~B. West, ``On reconstruction of graphs from the multiset of subgraphs obtained by deleting $l$ vertices,'' \emph{IEEE Transactions on Information Theory}, vol.~67, no.~6, pp. 3278--3286, 2020.

\bibitem{groenland2021size}
C.~Groenland, H.~Guggiari and A.~Scott, ``Size reconstructibility of graphs,'' \emph{Journal of Graph Theory}, vol.~96, no.~2, pp. 326--337, 2021.

\bibitem{batu2004reconstructing}
T.~Batu, S.~Kannan, S.~Khanna, and A.~McGregor, ``Reconstructing strings from
  random traces,'' \emph{Proceedings of the Fifteenth Annual ACM-SIAM Symposium on
Discrete Algorithms (SODA)}, pp. 910--918, 2004.

\bibitem{holenstein2008trace}
T.~Holenstein, M.~Mitzenmacher, R.~Panigrahy, and U.~Wieder, ``Trace reconstruction with constant deletion probability and related results,'' \emph{Proceedings of the Nineteenth Annual ACM-SIAM Symposium on
Discrete Algorithms (SODA)}, pp. 389--398, 2008.

\bibitem{nazarov2017trace}
F.~Nazarov and Y.~Peres, ``Trace reconstruction with $\exp(O(n^{1/3}))$
  samples,'' in \emph{Proceedings of the 49th Annual ACM SIGACT Symposium on
  Theory of Computing}, pp. 1042--1046, 2017.

\bibitem{de2017optimal}
A.~De, R.~O'Donnell, and R.A.~Servedio, ``Optimal mean-based algorithms for trace reconstruction,'' in \emph{Proceedings of the 49th Annual ACM SIGACT Symposium on Theory of Computing}, pp. 1047--1056, 2017.

\bibitem{chase2021separating}
Z.~Chase, ``Separating words and trace reconstruction,'' in \emph{Proceedings
  of the 53rd Annual ACM SIGACT Symposium on Theory of Computing}, pp. 21--31, 2021.
  
\bibitem{holden2020lower}
N.~Holden and R.~Lyons, ``Lower bounds for trace reconstruction,'' in \emph{The Annals of Applied Probability}, vol.~30,
  no.~2, pp. 503--525, 2020.

\bibitem{chase2021new}
Z.~Chase, ``New lower bounds for trace reconstruction,'' in \emph{Annales de
  l'Institut Henri Poincar{\'e}-Probabilit{\'e}s et Statistiques}, vol.~57,
  no.~2, pp. 627--643, 2021.

\bibitem{peres2017average}
Y.~Peres and A.~Zhai, ``Average-case reconstruction for the deletion channel: Subpolynomially many traces suffice,'' in \emph{2017 IEEE 58th Annual Symposium on Foundations of Computer Science (FOCS)}, pp. 228--239, 2017.

\bibitem{holden2020subpolynomial}
N.~Holden, R.~Pemantle, Y.~Peres, and A.~Zhai, ``Subpolynomial trace
  reconstruction for random strings and arbitrary deletion probability,''
  \emph{Mathematical Statistics and Learning}, vol.~2, no.~3, pp. 275--309, 2020.

\bibitem{ban2019beyond}
F.~Ban, X.~Chen, A.~Freilich, R.~Servedio, and S.~Sinha, ``Beyond trace reconstruction: Population recovery from the deletion channel,''
  \emph{2019 IEEE 60th Annual Symposium on Foundations of Computer Science (FOCS)}, pp. 745--768, 2019.

\bibitem{ban2019efficient}
F.~Ban, X.~Chen, R.~Servedio, and S.~Sinha, ``Efficient Average-Case Population Recovery in the Presence of Insertions and Deletions,''
  \emph{Approximation, Randomization, and Combinatorial Optimization. Algorithms and Techniques (APPROX/RANDOM 2019)}, 2019.

\bibitem{cheraghchi2020coded}
M.~Cheraghchi, R.~Gabrys, O.~Milenkovic, and J.~Ribeiro, ``Coded trace reconstruction,''
  \emph{IEEE Transactions on Information Theory}, vol.~66, no.~10, pp. 6084--6103, 2020.

\bibitem{brakensiek2020coded}
J.~Brakensiek, R.~Li, and B.~Spang, ``Coded trace reconstruction in a constant number of traces,''
  \emph{2020 IEEE 61st Annual Symposium on Foundations of Computer Science (FOCS)}, pp. 482--493, 2020.

\bibitem{narayanan2020population}
S.~Narayanan, ``Population recovery from the deletion channel: Nearly matching trace reconstruction bounds,''
  \emph{arXiv preprint arXiv:2004.06828}, 2020.

\bibitem{narayanan2021circular}
S.~Narayanan and M.~Ren ``Circular Trace Reconstruction,''
  \emph{12th Innovations in Theoretical Computer Science Conference (ITCS 2021)}, 2021.

\bibitem{davies2021reconstructing}
S.~Davies, M.~Racz, and C.~Rashtchian, ``Reconstructing trees from traces,''
  \emph{The Annals of Applied Probability}, vol.~31, no.~6, pp. 2772--2810, 2021.

\bibitem{krishnamurthy2021trace}
A.~Krishnamurthy, A.~Mazumdar, A.~McGregor, and S.~Pal, ``Trace reconstruction: Generalized and parameterized,''
  \emph{IEEE Transactions on Information Theory}, vol.~67, no.~6, pp. 3233--3250, 2021.

\bibitem{davies2021approximate}
S.~Davies, MZ.~R{\'a}cz, BG.~Schiffer, and C.~Rashtchian, ``Approximate trace reconstruction: Algorithms,''
  \emph{2021 IEEE International Symposium on Information Theory (ISIT)}, pp. 2525--2530, 2021.

\bibitem{chase2021approximate}
Z.~Chase and Y.~Peres, ``Approximate trace reconstruction of random strings from a constant number of traces,''
  \emph{arXiv preprint arXiv:2107.06454}, 2021.

\bibitem{borwein1997littlewood}
P.~Borwein and T.~Erd{\'e}lyi, ``Littlewood-type problems on subarcs of the unit circle,''
  \emph{Indiana University mathematics journal}, pp. 1323--1346, 1997.

\bibitem{hoeffding1963probability}
W.~Hoeffding, ``Probability Inequalities for Sums of Bounded Random Variables,''
  \emph{Journal of the American Statistical Association}, vol.~58, no.~301, pp. 13--30, 1963.

\bibitem{mcgregor2014trace}
A.~McGregor, E.~Price, and S.~Vorotnikova, ``Trace reconstruction revisited,''
  in \emph{European Symposium on Algorithms}.\hskip 1em plus 0.5em minus
  0.4em\relax Springer, 2014, pp. 689--700.

\bibitem{robson1989separating}
J.~M.~Robson, ``Separating strings with small automata,'' \emph{Information processing letters}, vol.~30, no.~4, pp. 209--214, 1989.

\bibitem{borwein1999littlewood}
P.~Borwein, T.~Erd{\'e}lyi and G.~K{\'o}s, ``Littlewood-type problems on [0, 1],''
  \emph{Proceedings of the London Mathematical Society}, vol.~79, no.~1, pp. 22--26, 1999.

\end{thebibliography}
\end{document}